\numberwithin{equation}{section}
\newtheorem{theorem}{Theorem}[section]
\newtheorem{corollary}[theorem]{Corollary}
\newtheorem{lemma}[theorem]{Lemma}
\theoremstyle{definition}
\newtheorem{definition}[theorem]{Definition}
\newtheorem{example}[theorem]{Example}
\theoremstyle{remark}
\newtheorem{remark}[theorem]{Remark}
\let\originalleft\left
\let\originalright\right
\renewcommand{\left}{\mathopen{}\mathclose\bgroup\originalleft}
\renewcommand{\right}{\aftergroup\egroup\originalright}
\renewcommand{\tilde}{\widetilde}
\renewcommand{\hat}{\widehat}
\renewcommand{\Im}{\operatorname{Im}}
\newcommand{\dbold}{\boldsymbol{d}}
\newcommand{\Ecurly}{\mathcal{E}}
\newcommand{\fbold}{\boldsymbol{f}}
\newcommand{\Hbold}{\boldsymbol{H}}
\newcommand{\Lbold}{\boldsymbol{L}}
\newcommand{\Lcurlybold}{\boldsymbol{\mathcal{L}}}
\newcommand{\nbold}{\boldsymbol{n}}
\newcommand{\Pbold}{\boldsymbol{P}}
\newcommand{\Phibold}{\boldsymbol{\Phi}}
\newcommand{\Tbold}{\boldsymbol{T}}
\newcommand{\ubold}{\boldsymbol{u}}
\newcommand{\Ubold}{\boldsymbol{U}}
\newcommand{\vbold}{\boldsymbol{v}}
\newcommand{\xbold}{\boldsymbol{x}}
\newcommand{\ybold}{\boldsymbol{y}}
\newcommand{\zbold}{\boldsymbol{z}}
\newcommand{\zerobold}{\boldsymbol{0}}
\newcommand{\xibold}{\boldsymbol{\xi}}
\DeclareMathOperator{\Div}{div}
\DeclareMathOperator{\Grad}{\mathbf{grad}}
\DeclareMathOperator{\Curl}{\mathbf{curl}}
\DeclareMathOperator{\VectLapl}{\boldsymbol{\Delta}}
\newcommand{\Scal}[2]{\left\langle #1\right\rangle_{#2}}
\DeclareMathOperator{\Spec}{Spec}
\newcommand{\Neu}{\mathrm{Neu}}
\newcommand{\Dir}{\mathrm{Dir}}
\newcommand{\mix}{\mathrm{mix}}
\newcommand{\Jones}{\mathrm{Jones}}
\newcommand{\Rob}{\mathrm{Rob}}
\newcommand{\tang}{\mathrm{t}}
\newcommand{\normal}{\mathrm{n}}
\newcommand{\DtN}{\mathcal{D}}
\newcommand{\NtD}{\mathcal{N}}
\newcommand{\ip}{\Lambda}
\newcommand{\ipmix}{\Lambda_{\mix}}
\newcommand{\Ltworho}{\boldsymbol{L}_{\rho_s}^2(\Omega)}
\newcommand{\Hone}{\boldsymbol{H}^1(\Omega)}
\newcommand{\Pn}{P_\normal}
\renewcommand{\imath}{\mathrm{i}}
\newcommand{\email}[1]{\href{mailto: #1}{#1}}
\author{%
Michael Levitin%
\thanks{\textbf{ML}: Department of Mathematics and Statistics, University of Reading, Whiteknights, PO Box 220, 
Reading RG6 6AX, UK; \email{m.levitin@reading.ac.uk}; \href{http://www.michaellevitin.net}{www.michaellevitin.net}}
\and Peter Monk%
\thanks{\textbf{PM}: Department of Mathematical Sciences, University of Delaware,
Newark, DE 19716, USA; \email{monk@udel.edu}}
 \and Virginia Selgas%
 \thanks{\textbf{VS}: Departamento de Matem\'aticas, Universidad de Oviedo,
EPIG, C/ Luis Ortiz Berrocal s/n,
33203 Gij\'on, Spain; \email{selgasvirginia@uniovi.es}; \href{https://virginiaselgas.com}{virginiaselgas.com}}
}
\title{Impedance eigenvalues in linear elasticity
\footnote{{\bf Published in }SIAM J. Appl. Math. \textbf{81}(6) (2021), 2433--2456, doi  \href{https://doi.org/10.1137/21M1412955}{10.1137/21M1412955}.
This arXiv version corrects a misprint in formula \eqref{eq:potsol}  in the Supplementary materials in the published version}
\footnote{{\bf MSC(2020): } 35R30, 35P25, 35P05, 65N21, 65N25}
\footnote{{\bf Keywords}: Fluid-solid interaction, inverse scattering, Dirichlet-to-Neumann map, linear elasticity, 
impedance eigenvalues}\footnote{The research of P. Monk is partially supported by the US AFOSR under grant number FA9550-20-1-0024. The stay of V. Selgas in the University of Delaware was funded by the program Movilidades de Excelencia of the University  of Oviedo, and her research is partially supported by the project MTM2017-87162-P of MINECO. This research was initiated while M. L. and P. M. attended a workshop on ``Steklov eigenproblems'' at the American Institute of Mathematics}}
\renewcommand\footnotemark{}
\titleformat{\subsection}[runin]
{\normalfont\normalsize\bfseries}{\thesubsection.}{1ex}{}[.]
\begin{document}
\date{28 January 2022}
\maketitle
\begin{abstract}
This paper is devoted to studying impedance eigenvalues (that is, eigenvalues of a particular Dirichlet-to-Neumann map)  for the time harmonic  linear elastic wave problem, and their potential use as target-signatures for fluid-solid interaction problems.  We first consider several possible families of eigenvalues of the elasticity problem, focusing on certain impedance eigenvalues that are an analogue of Steklov eigenvalues. We show that one of these families arises naturally in inverse scattering. We also analyse their approximation from far field measurements of the scattered pressure field in the fluid, and illustrate several alternative methods of approximation in the case of an isotropic elastic disk.  
\end{abstract}
%
\tableofcontents
\section{Introduction}\label{sec:introduction}

In classical scattering theory, \emph{target signatures} are discrete sets of numbers that can be computed from
scattering data and which can either help to characterise a scatterer (by comparing the computed signatures with a 
dictionary of signatures) or be used to indicate changes in a scatterer due to changes in the signature. The first electromagnetic target signatures were scattering resonances which can, in principle, be computed from a time domain radar signal~\cite{Melrose95}.  More recently, Cakoni, Colton and co-workers have suggested the use of transmission eigenvalues as target signatures for penetrable scatterers in the electromagnetic and acoustic contexts~\cite{col19}.  However, for an absorbing penetrable medium, real transmission eigenvalues do not exist~\cite[Theorem 8.12]{col19}, and hence they cannot be determined from scattering data by current methods.  This has led to a search for alternative target signatures with one approach being relevant to this paper: those known as \emph{Steklov eigenvalues}~\cite{cakoni+al2016,CLM73}. The usage of these eigenvalues as target signatures is based on the idea of modifying the far field operator.  This technique was first used in \cite{CM115,CM116} as part of a shape reconstruction algorithm precisely to avoid a breakdown of the method at transmission eigenvalues. 

In this paper we will consider the linearised fluid-solid interaction problem in which a solid is surrounded by fluid and interrogated by incident waves from the fluid.  Besides being interesting in its own right~\cite{hsi00,luke+martin,monk+selgas_1,monk+selgas_2} this problem involves multiphysics  and serves to illustrate some of
the issues that arise in the application of target signatures to more complex problems.  

Before studying the fluid-solid interaction problem, we first analyse several different eigenvalue problems for linear elasticity, focusing on eigenvalues for certain Steklov like problems. Because the equations of elasticity involve vector functions, there are several ways to define a Dirichlet-to-Neumann map, and hence several possible Steklov type eigenvalue problems. One of these  arises from the aforementioned consideration of target signatures. Because the new eigenvalue problem is non-standard, we distinguish it from the classical Steklov problem and refer to it as the \emph{impedance problem}.

Our paper makes novel contributions in two ways. First, we study several families of eigenvalues for the elasticity problem and provide new estimates regarding their parametric dependence.  Then, for one family, we show how to relate these eigenvalues to a modified far field equation and verify that they can be obtained from far field data.  Some numerical results, using a novel technique for finding eigenvalues,  illustrate our theory.  

The rest of this paper proceeds as follows. In Section \ref{sec:MichaelOnEigs} we study several eigenvalue problems in elasticity needed to describe our results, and of interest in their own right.  We prove parametric dependence and existence results. Then, in Section~\ref{sec:directFSproblem}, we summarise the forward fluid-solid interaction problem that underlies the inverse problem we shall consider. We also recall the definition and basic properties of the  far field operator, as well as define an auxiliary problem and its resulting modified far field operator.

Next, in Section \ref{sec:ffoperator_modification1} we study the inverse problem at hand: we show how impedance eigenvalues are related to solutions of the modified far field equation. We also discuss their numerical approximation by solving a parametrised set  of modified far field equations, so that they may be obtained from scattering data.

Finally, in Section \ref{sec:NumSteklovFS}, we investigate numerically the approximation of the impedance eigenvalues from far field measurements for a particular  two-dimensional case. This involves a new method for approximating
these eigenvalues based on a further modification of the modified far field equation.

Concerning notation, boldface quantities will represent vector valued functions or spaces.  In particular, $\Hone:=(H^1(\Omega))^m$ where $\Omega\subset\mathbb{R}^m$.

%
\section{Elasticity normal-normal Dirichlet-to-Neumann and Neumann-to-Diri\-ch\-let maps}\label{sec:MichaelOnEigs}

In this section we use appropriate Dirichlet-to-Neumann and Neu\-mann-to-Dirichlet operators to study 
interior eigenvalue problems, in particular of the impedance type, for the elasticity system.  First we define notation for the elasticity system under study.
 Later we will couple it to external equations, together with transmission conditions to obtain the fluid-solid interaction problem. 

\subsection{The elasticity system} 
We denote by $\Omega \subset \mathbb{R}^m$ (where $m=2$ or $3$) a bounded domain occupied by a linear elastic solid. Furthermore, $\Gamma  = \partial \Omega$ denotes the  boundary of $\Omega$, and $\nbold$ is the unit outward normal to $\Omega$ on $\Gamma$. For simplicity, we shall assume that $\Gamma$ is smooth.  Because we later want to consider the fluid-solid problem we also assume that $\mathbb{R}^m\setminus\overline{\Omega}$ is connected.
We suppose that the solid is isotropic, homogeneous and undertakes small deformations.  We also assume that the  elastodynamic displacement field, denoted $\ubold$, is time-harmonic, and then work in the frequency domain.

We denote by $\epsilon (\ubold)  := \left( \frac{1}{2} (\partial_i u_j + \partial_j u_i) \right)_{i,j = 1}^{m}$ the infinitesimal strain tensor and by $\sigma (\ubold) := \lambda \,\Div \ubold \; I + 2 \mu \, \epsilon (\ubold)$ the stress tensor; here and in the sequel, $I$ is the identity tensor (that is, $I := (\delta_{ij})_{i,j = 1}^{m}$ where $\delta_{ij}$ stands for the Kronecker delta) and $\lambda, \mu \in \mathbb{R}$ are the Lam\'e moduli. We also define the standard traction operator  by 
\begin{equation*}
\Tbold\ubold := \sigma (\ubold ) \, \nbold = \Big(\lambda \Div  \ubold\; n_i + 2 \mu \sum\limits_{j = 1}^{m} \epsilon_{ij} (\ubold )\;  n_j\Big)_{i = 1}^{m} \qquad\mbox{on }\Gamma \, .
\end{equation*} 
In the remainder of the paper, we make the following general assumptions (cf. \cite{luke+martin}) on these coefficients: the functions $\mu $ and $ \lambda + \frac{2}{m} \mu $ are  bounded, piecewise smooth and uniformly strictly positive in $\Omega$. 
In this section, to allow us to state results using the theory of pseudodifferential operators, we assume that $\lambda$ and $\mu$ are constant in $\Omega$.  In later sections they will be taken to be piecewise smooth. The key existence and discreteness results for impedance eigenvalues hold for more general coefficients $\lambda$ and $\mu$; it is also sufficient to assume that the boundary $\Gamma$ is Lipschitz, see \cite{AAL}.

The mass density  in the solid  is denoted by  $\rho_s: \Omega\to \mathbb{R}$  and is assumed to be a piecewise smooth real valued function such that $\rho_s(\xbold) \ge \rho_{s,0}>0$ a.e. in $\Omega$ (where $\rho_{s,0}$ is a constant). 

Under the previous hypotheses, the elastodynamic displacement field $\ubold$ satisfies the elasticity system in the time harmonic regime
\begin{equation}
\nabla \cdot \sigma (\ubold) + \rho_s \Lambda \ubold = \zerobold \mbox{ in }\Omega,\label{eq:PMa1}\\
\end{equation}
where $\Lambda=\omega^2$ and $\omega$ stands for the angular frequency of the wave.  To make the eigenvalue problems clearer, we allow $\ip$  to be any real number (possibly negative) so that, in this section, $\omega$ and hence $\ip$ is sometimes considered as a eigenvalue rather than as a fixed frequency. We will consider various boundary conditions that will be discussed as needed in the upcoming section.

\subsection{Boundary value problems for the elasticity operator}\label{subs:bvpelasticity} 
For convenience  we rewrite \eqref{eq:PMa1} as
\begin{equation}\label{eq:elasticvibrations}
\Lbold\ubold+ \rho_s \ip \ubold =\zerobold,
\end{equation}
where
\begin{equation}\label{eq:elasticoperator}
\Lbold\ubold=\Lbold_{\lambda,\mu}\ubold := \left(\mu\VectLapl+(\lambda+\mu)\Grad\Div \right)\ubold = 
\left(-\mu\Curl\Curl+(\lambda+2\mu)\Grad\Div\right)\ubold
\end{equation}
defines the standard linear elasticity operator.  

The following facts are standard, and are collected below mostly in order to fix notation. Multiplying $\Lbold\ubold$ by $\overline{\vbold}$ and integrating by parts, one obtains Green's formula, see, e.g., \cite{kupradze}:
\begin{equation}\label{eq:Lbyparts}
\Scal{\Lbold\ubold,\vbold}{\Lbold^2(\Omega)}=-\Ecurly[\ubold,\vbold]+\Scal{\Tbold\ubold,\vbold}{\Lbold^2(\Gamma)},
\end{equation}
where $\Scal{\ubold,\vbold}{\Lbold^2(\Omega)}$ and $\Scal{\ubold,\vbold}{\Lbold^2(\Gamma)}$ denote the standard inner products $\displaystyle\int_\Omega \ubold(\xbold)\cdot\overline{\vbold(\xbold)}\,d\xbold$ and $\displaystyle\int_\Gamma \ubold(\xbold)\cdot\overline{\vbold(\xbold)}\,dS_{\xbold}$, respectively, and
\[
\Ecurly[\ubold,\vbold]
:=\int_\Omega \left(\lambda\Div\ubold\Div\overline{\vbold}+\mu\sum_{i,j=1}^m \left(\frac{\partial u_{i}}{\partial x_{j}}+\frac{\partial u_{j}}{\partial x_{i}}\right)\overline{\frac{\partial v_{i}}{\partial x_{j}}}\right)\,d\xbold.
\]
For constant $\lambda$ and $\mu$, this can be rearranged as
\begin{equation}\label{eq:E}
\begin{split}
\Ecurly[\ubold,\vbold]
& =\lambda\Scal{\Div\ubold,\Div\vbold}{L^2(\Omega)}\\
&+\mu\left(-\Scal{\Curl\ubold,\Curl\vbold}{\Lbold^2(\Omega)}+2\sum_{j=1}^m \Scal{\Grad u_j,\Grad v_j}{\Lbold^2(\Omega)}\right) ,
\end{split}
\end{equation}
where we use the standard definition of  the  curl of a vector field in dimensions two or three. We immediately see from \eqref{eq:E} that $\Ecurly$ is Hermitian,
\[
\Ecurly[\ubold,\vbold]=\overline{\Ecurly[\vbold,\ubold]},
\]
for any $\ubold,\vbold\in \Hone$; moreover, for any $\ubold\in \Hone$ we have 
\[
\Ecurly[\ubold,\ubold]\ge 0,
\] 
and if, additionally, $\ubold$ solves \eqref{eq:elasticvibrations} in the weak sense, then
\[
\Ecurly[\ubold,\ubold]-\ip\Scal{\ubold,\ubold}{\Ltworho}=\Scal{\Tbold\ubold,\ubold}{\Lbold^2(\Gamma)};
\]
here $\Ltworho$ is the Hilbert space $\Lbold^2(\Omega)$ with the  weighted inner product  
\[
\Scal{\ubold,\vbold}{\Ltworho}:=\int_\Omega \rho_s(\xbold)\, \ubold(\xbold)\cdot\overline{\vbold(\xbold)}\,d\xbold.
\]

In what follows we  refer to five spectral boundary value problems for \eqref{eq:elasticvibrations}, treating $\ip$ as a spectral parameter. To be more precise, 
we deal with spectral boundary value problems for the operator pencil 
\[
\Lcurlybold=\Lcurlybold(\ip):=\Lbold+\rho_s\ip.
\]
The first is the standard \emph{Neumann eigenvalue problem} of finding $\ubold\in \boldsymbol{H}^1(\Omega)/\mathbb{R}$, $\ubold\not=\zerobold$, and $\ip\in\mathbb{R}$ such that
\begin{equation}\label{eq:elasticNeumann}
\begin{cases}
\Lcurlybold(\ip)\ubold=\zerobold\qquad&\text{in }\Omega,\\
\Tbold\ubold=\zerobold\qquad&\text{on }\Gamma.
\end{cases}
\end{equation}
Its spectrum $\Spec(\Lcurlybold^\Neu)=\{0=\ip_{\Neu,1}, \ip_{\Neu,2},\dots\}$  is discrete and consists of non-negative eigenvalues $\ip_{\Neu,j}$, repeated with multiplicities and enumerated  non-decreasingly,  with the only limit point at $+\infty$. The eigenvalues can be found using a standard minimax principle,
\begin{equation}\label{eq:varprN}
\ip_{\Neu,j}=\inf_{\substack{\mathcal{H}\subset \Hone\\\dim \mathcal{H}=j}}\ \sup_{\substack{\ubold\in\mathcal{H}\\\ubold\ne\zerobold}}\ \frac{\Ecurly[\ubold,\ubold]}{\|\ubold\|^2_{\Ltworho}} \qquad \text{for } j=1,2,\ldots
\end{equation}

The second  is the standard \emph{Dirichlet eigenvalue problem} (which is not needed in this paper but included for completeness) that seeks $\ubold\in \boldsymbol{H}^1(\Omega)$, $\ubold\not=\zerobold$, and $\ip\in\mathbb{R}$ such that
\[
\begin{cases}
\Lcurlybold(\ip)\ubold=\zerobold\qquad&\text{in }\Omega,\\
\ubold=\zerobold\qquad&\text{on }\Gamma.
\end{cases}
\]
Its spectrum $\Spec(\Lcurlybold^\Dir)=\{\ip_{\Dir,1}, \ip_{\Dir,2},\dots\}$  is discrete and consists of positive eigenvalues $\ip_{\Dir,j}$, repeated with multiplicities and enumerated  non-decreasingly,  with the only limit point at $+\infty$. The eigenvalues again can be found using a standard minimax principle,
\begin{equation}\label{eq:varprD}
\ip_{\Dir,j}=\inf_{\substack{\mathcal{H}\subset \boldsymbol{H}^1_0(\Omega)\\ \dim \mathcal{H}=j}}\ \sup_{\substack{\ubold\in\mathcal{H}\\\ubold\ne\zerobold}}\ \frac{\Ecurly[\ubold,\ubold]}{\|\ubold\|^2_{\Ltworho}} \qquad \text{for } j=1,2,\ldots 
\end{equation}

The third eigenvalue  problem  that we need has \emph{mixed boundary conditions}, with the Neumann conditions imposed tangentially to the boundary, and the Dirichlet conditions in the normal direction. More precisely, for a field $\fbold$ defined on $\Gamma$,  let $\Pn:=\fbold\mapsto(\fbold\cdot\nbold)$ and $\Pbold_\tang:=\fbold\mapsto \fbold-(\Pn\fbold)\nbold$ be the normal and tangential projection operators, respectively, so that 
\[
\fbold = \Pbold_\tang\fbold + \nbold \Pn\fbold.
\]
Consider the mixed spectral problem of finding $\ubold\in \boldsymbol{H}^1(\Omega) $, $\ubold\not=\zerobold$, and $\ip\in\mathbb{R}$ such that
\begin{equation}\label{eq:elasticmixed}
\begin{cases}
\Lcurlybold(\ip)\ubold=\zerobold\qquad&\text{in }\Omega,\\
\Pbold_\tang\Tbold\ubold=\zerobold\qquad&\text{on }\Gamma,\\
\Pn(\ubold|_{\Gamma})=0\qquad&\text{on }\Gamma.
\end{cases}
\end{equation}
Similar to the Neumann and Dirichlet problems, its spectrum $\Spec(\Lcurlybold^\mix)=\{\ip_{\mix,1}, \ip_{\mix,2},\dots\}$ consists of non-negative eigenvalues, repeated with multiplicities and enumerated non-decreasingly, and the minimax principle takes the form
\begin{equation}\label{eq:varprM}
\ip_{\mix,j}=\inf_{\substack{\mathcal{H}\subset \Hbold^1_{\normal,0}(\Omega)\\\dim \mathcal{H}=j}}\ \sup_{\substack{\ubold\in\mathcal{H}\\\ubold\ne\zerobold}}\ \frac{\Ecurly[\ubold,\ubold]}{\|\ubold\|^2_{\Ltworho}} \qquad \text{for } j=1,2,\ldots,
\end{equation}
where the Sobolev space $\Hbold^1_{\normal,0}(\Omega)\subset \Hone$ consists of vector-valued functions from $\Hone$ whose normal components vanish on the boundary.

The fourth eigenvalue problem is the \emph{mixed Neumann--Robin problem},
\begin{equation}\label{eq:elasticRobin}
\begin{cases}
\Lcurlybold(\ip)\ubold=\zerobold\qquad&\text{in }\Omega,\\
\Pbold_\tang\Tbold\ubold=\zerobold\qquad&\text{on }\Gamma,\\
\Pn(\Tbold\ubold)=\kappa\Pn(\ubold|_{\Gamma})\qquad&\text{on }\Gamma,
\end{cases}
\end{equation}
where $\kappa\in\mathbb{R}$ is a given parameter.  
Its spectrum $\Spec(\Lcurlybold^{\Rob(\kappa)})=\{\ip_{\Rob(\kappa),1}, \ip_{\Rob(\kappa),2}, \dots\}$, written once more as a multiset of eigenvalues in non-decreasing order with multiplicities, is given by the minimax principle
\begin{equation}\label{eq:varprR}
\ip_{\Rob(\kappa),j}=\inf_{\substack{\mathcal{H}\subset \Hbold^1(\Omega)\\\dim \mathcal{H}=j}}\ \sup_{\substack{\ubold\in\mathcal{H}\\\ubold\ne\zerobold}}\ 
\frac{\Ecurly[\ubold,\ubold]-\kappa\|P_n\ubold\|^2_{L^2(\Gamma)}}{\|\ubold\|^2_{\Ltworho}} \qquad \text{for } j=1,2,\ldots.
\end{equation}
We note that 
\[
\Lcurlybold^{\Rob(0)}=\Lcurlybold^\Neu,
\]
and (at least formally)
\[
\Lcurlybold^{\Rob(-\infty)}=\Lcurlybold^\mix,
\]
and that the eigenvalues $\ip_{\Rob(\kappa),j}$ are monotone non-increasing in $\kappa$.

Using the fact that 
\[
\boldsymbol{H}^1_0(\Omega)\subset \Hbold^1_{\normal,0}(\Omega)\subset \Hone,
\]
and the variational principles \eqref{eq:varprN}, \eqref{eq:varprD}, \eqref{eq:varprM}, and \eqref{eq:varprR},
we immediately obtain, with $\kappa_1<0<\kappa_2$, the bounds 
\[
\ip_{\Rob(\kappa_2),j}\le\ip_{\Neu,j}=\ip_{\Rob(0),j}\le\ip_{\Rob(\kappa_1),j}\le\ip_{\mix,j}=\ip_{\Rob(-\infty),j}\le\ip_{\Dir,j}
\]
for $j =1,2,\ldots$. 

We will denote, for $\aleph\in\{\Neu,\Dir,\mix\}$, the standard counting functions of eigenvalues of $\Lcurlybold^\aleph$ less than a given $\Lambda$, by
\begin{equation}\label{eq:NLambda}
N^\aleph(\Lambda):=\#\{j; \ip_{\aleph,j}<\Lambda\}.
\end{equation}

Finally, consider the following  {overdetermined} eigenvalue problem for the \emph{Jones modes}.  In this problem we seek $\ubold\in \boldsymbol{H}^1(\Omega)$, $\ubold\not=0$, and $\ip\in\mathbb{R}$ such that
\begin{equation}\label{eq:elasticJones}
\begin{cases}
\Lcurlybold(\ip)\ubold=\zerobold\qquad&\text{in }\Omega,\\
\Tbold\ubold=\zerobold\qquad&\text{on }\Gamma,\\
\Pn(\ubold|_{\Gamma})=0\qquad&\text{on }\Gamma,
\end{cases}
\end{equation}
and denote its spectrum  by $\Spec(\Lcurlybold^\Jones)$. For any $\ip\in \Spec(\Lcurlybold^\Jones)$ with $\ip>0$, we call $\omega=\ip^{1/2}$ a \emph{Jones frequency} for $\Omega$, and the corresponding non-trivial solutions $\ubold\in\Hbold^1_{\normal,0}(\Omega)$ of \eqref{eq:elasticJones} are called \emph{Jones modes}. It is a classical result that Jones frequencies exist for axisymmetric bodies. Moreover, when they exist
the set of the Jones frequencies is discrete, and each Jones frequency has a finite multiplicity (see \cite[\S 3]{luke+martin}, \cite[\S 2.1]{natroshvili+kharibegashvili+tediashvili} and references therein.

A comparison of \eqref{eq:elasticNeumann}, \eqref{eq:elasticmixed}, and \eqref{eq:elasticJones} shows that
\[
\Spec(\Lcurlybold^\Jones)\subseteq \Spec(\Lcurlybold^\Neu)\cap \Spec(\Lcurlybold^\mix);
\]
moreover, we note that $\omega$ is a Jones frequency if and only if \eqref{eq:elasticNeumann} and \eqref{eq:elasticmixed} have a common non-trivial eigenmode $\ubold$ corresponding to the same eigenvalue $\ip$.

\subsection{Normal-normal Dirichlet-to-Neumann map for elasticity}\label{subs:DtNmap} 
Let us now fix  $\Lambda\in\mathbb{R}$, for the moment such that $\ip\not\in\Spec({\Lcurlybold^\mix})$, and consider the non-homogenous boundary value problem associated to \eqref{eq:elasticmixed}:
\begin{equation}\label{eq:elasticmixednonhom}
\begin{cases}
\Lcurlybold(\ip)\ubold=\zerobold\qquad&\text{in }\Omega,\\
\Pbold_\tang\Tbold\ubold=\zerobold\qquad&\text{on }\Gamma,\\
\Pn(\ubold|_{\Gamma}) =f\qquad&\text{on }\Gamma.
\end{cases}
\end{equation}
For a given function $f\in H^{1/2}(\Gamma)$ this problem has a unique solution $\ubold =\ubold_f \in \Hone${; notice that $\Tbold\ubold_f =(\Tbold\ubold_f\cdot \nbold)\nbold\in H^{-1/2}(\Gamma)$}. We can therefore define the \emph{normal-normal Dirichlet-to-Neumann map}  (abbreviated as \emph{nnDtN map}), $\DtN_{\ip}$ which sends the normal Dirichlet datum $f=\Pn (\ubold|_{\Gamma})$  of a solution $\ubold$ of \eqref{eq:elasticmixednonhom} into its normal Neumann datum $\Tbold\ubold\cdot \nbold$ {(or, in physical terms, the normal component of the displacements into a normal traction)}.
By \eqref{eq:Lbyparts} and \eqref{eq:elasticmixednonhom}, we have
\begin{equation}\label{eq:DtNbyparts}
\Scal{\DtN_{\ip}f,g}{L^2(\Gamma)}=\Ecurly[\ubold_f,\ubold_g]-\ip\Scal{\ubold_f,\ubold_g}{\Lbold_{\rho_s}^2(\Omega)}
\end{equation}
for $f,g\in H^{1/2}(\Gamma)$, therefore $\DtN_{\ip}$ can be extended by duality to a self-adjoint operator acting in $L^2(\Gamma)$. 

If $\ip\in \Spec({\Lcurlybold^\mix})$, we can still define $\DtN_{\ip}$ in a similar manner, as long as it is restricted to the subspace of  $H^{1/2}(\Gamma)$ which is an orthogonal complement in $L^2(\Gamma)$ to $\Pn\Tbold\Phibold_{\ip}(\Lcurlybold^\mix)$,  where $\Phibold_{\ip}(\Lcurlybold^\mix)$ denotes the eigenspace of $\Lcurlybold^\mix$ corresponding to the eigenvalue $\ip$.

An important property of the nnDtN map for elasticity, similar to that of the standard Dirichlet-to-Neumann map for the scalar Laplacian, is given in the following lemma.
\begin{lemma}\label{lem:DtNprinsymb}
The nnDtN map $\DtN_{\ip}$ is an elliptic pseudodifferential operator of order one with the principal symbol 
\[
(\operatorname{prin\,symb} \DtN_{\ip})(\xibold)=\frac{2\mu(\lambda+\mu)}{\lambda+2\mu}|\xibold|,\qquad\xibold\in\mathbb{R}^{m-1}\setminus\{\zerobold\}.
\]
\end{lemma}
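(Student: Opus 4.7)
The plan is to extract the principal symbol by the standard freezing-coefficients argument for elliptic boundary value problems. Using a chart that straightens $\Gamma$, I would reduce to the local model with $\Omega=\{x_m<0\}$, $\Gamma=\{x_m=0\}$, $\nbold=e_m$; since the principal symbol only sees the leading-order part of $\DtN_\ip$, the zero-order term $\rho_s\ip\ubold$ may be dropped and $\lambda,\mu$ treated as constants.

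Next I would partially Fourier-transform in the tangential variables $x'\in\mathbb{R}^{m-1}$ and seek solutions of $\Lbold\ubold=\zerobold$ in the half-space decaying as $x_m\to-\infty$. A Helmholtz decomposition supplies a P-mode $\ubold_P=\Grad\bigl(e^{\imath x'\cdot\xibold+|\xibold|x_m}\bigr)$ together with $m-1$ divergence-free S-modes. Exploiting rotational invariance in the tangential variable, I may take $\xibold=|\xibold|\,e_1$, after which the $m-2$ pure-shear (SH) polarizations orthogonal to the $(\xibold,\nbold)$-plane carry no normal component and generate traction only in their own polarization direction; the tangential-traction condition therefore forces their amplitudes to vanish, reducing the relevant computation to the two-dimensional problem for one P- and one SV-mode.

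I then impose the boundary data from \eqref{eq:elasticmixednonhom}, namely $\Pbold_\tang\Tbold\ubold=\zerobold$ and $\Pn\ubold=f$. This becomes a $2\times 2$ linear system in the coefficients $(c_P,c_{SV})$; solving it and substituting back into $\Pn\Tbold\ubold|_{x_m=0}$ produces the frozen symbol of $\DtN_\ip$. The numerator that appears is, up to a prefactor, the Rayleigh-type determinant $(|\xibold|^2+\alpha_S^2)^2-4|\xibold|^2\alpha_P\alpha_S$, where $\alpha_P^2=|\xibold|^2-\rho_s\ip/(\lambda+2\mu)$ and $\alpha_S^2=|\xibold|^2-\rho_s\ip/\mu$ are the P- and S-decay rates retained to the order needed.

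The genuine technical nuisance is that the purely top-order limit is degenerate: formally at $\ip=0$ one has $\alpha_P=\alpha_S=|\xibold|$, so both the Rayleigh determinant and the factor $\alpha_S^2-|\xibold|^2$ appearing in the denominator of $c_P$ vanish simultaneously. Expanding $\alpha_P,\alpha_S$ to the next order in $|\xibold|^{-1}$ and multiplying out, the $\rho_s\ip$ factors cancel in the ratio and one recovers the clean leading term $\frac{2\mu(\lambda+\mu)}{\lambda+2\mu}|\xibold|$ stated in the lemma. That this computation truly identifies $\DtN_\ip$ as a classical pseudodifferential operator of order one, rather than merely pinning down the leading part of its frozen-coefficient symbol, follows from the Boutet de Monvel calculus applied to the elliptic boundary value problem \eqref{eq:elasticmixednonhom}, together with a standard partition-of-unity patching argument, which I would cite rather than reproduce. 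Ellipticity is then immediate: the standing hypothesis $\mu>0$ and $\lambda+\tfrac{2}{m}\mu>0$ forces $\lambda+\mu>0$ for $m\in\{2,3\}$, so the principal symbol is strictly positive on $\mathbb{R}^{m-1}\setminus\{\zerobold\}$.
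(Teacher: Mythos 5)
Your proposal is correct, and it follows the same overall strategy as the paper: reduce to the frozen-coefficient half-space model, Fourier-transform tangentially, drop the zero-order term $\rho_s\ip\ubold$, and read off the symbol from the normal traction of the decaying solution of the resulting ODE system; ellipticity then follows as you say from $\mu>0$ and $\lambda+\tfrac2m\mu>0$ forcing $\lambda+\mu>0$. The one genuine difference is how the ODE is solved, and it sits exactly at the crux of this computation. Once the zero-order term is dropped, $\kappa=|\xibold|$ is a \emph{double} root of the characteristic equation of the Lam\'e system, and the paper solves the degenerate system directly, obtaining $\mathrm{e}^{x_m|\xibold|}$ times a linear polynomial in $x_m$ (hence the explicit factors of $x_3$ in the displayed three-dimensional solution); no limit or cancellation ever appears. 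You instead keep $\rho_s\ip$ as a regularizer so that the P- and SV-decay rates stay distinct, solve the nondegenerate $2\times 2$ system (after correctly discarding the SH polarization in three dimensions), and pass to leading order. Your description of the resulting $0/0$ is accurate: the normal traction comes out as $\mu\bigl((|\xibold|^2+\alpha_S^2)^2-4|\xibold|^2\alpha_P\alpha_S\bigr)\big/\bigl(\alpha_P(\alpha_S^2-|\xibold|^2)\bigr)$ times the normal displacement, the Rayleigh-type numerator equals $-2|\xibold|^2\rho_s\ip\,\frac{\lambda+\mu}{\mu(\lambda+2\mu)}+\mathrm{O}(1)$, the denominator contains $\alpha_S^2-|\xibold|^2=-\rho_s\ip/\mu$, and the $\rho_s\ip$ factors cancel to leave $\frac{2\mu(\lambda+\mu)}{\lambda+2\mu}|\xibold|$. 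What your route buys is a computation with simple exponential potentials and a physically transparent appearance of the Rayleigh function; what the paper's buys is avoiding the limit altogether. Both arguments are complete modulo the same appeal to the pseudodifferential calculus for elliptic boundary problems to upgrade the frozen-coefficient computation to a statement about $\DtN_\ip$ itself, which the paper likewise handles by citation.
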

\begin{remark} We emphasise that the statement of Lemma \ref{lem:DtNprinsymb} only holds for a smooth boundary $\Gamma$. In that case, since the principal symbol of the scalar Laplace--Beltrami operator $-\Delta_\Gamma$ acting on $\Gamma$ is $|\xi|^2$, we can see that (modulo lower order  terms) the nnDtN map for elasticity behaves as $ \sqrt{-\Delta_\Gamma}$ up to a multiplicative constant.
\end{remark}
\begin{proof} This is fairly standard, and can be obtained from the principal symbol of the full Dirichlet-to-Neumann map for elasticity, $\ubold|_\Gamma\to \Tbold\ubold$ subject to $\Lcurlybold(\ip)\ubold=\zerobold$,  see, e.g., \cite{AAL}. We deduce the result directly for completeness in the following manner. Consider problem \eqref{eq:elasticmixednonhom} in the half space $\Omega=\mathbb{R}^{m-1}\times(-\infty,0)$ with the boundary $\Gamma=\mathbb{R}^{m-1}$ and the exterior unit normal $\nbold=(0,0,1)^T$. Now replace it with an ODE matrix boundary value problem
\begin{equation}\label{eq:elasticmixednonhomODE}
\begin{cases}
\Lbold\left(\imath\xibold,\frac{d}{dx_m}\right)\ubold=\zerobold,\qquad&\text{in }\Omega,\\
\Pbold_\tang\Tbold\left(\imath\xibold,\frac{d}{dx_m}\right)\ubold=\zerobold\qquad&\text{on }\Gamma,\\
\Pn(\ubold|_{\Gamma})=f\qquad&\text{on }\Gamma,
\end{cases}
\end{equation}
where we first of all have dropped the lower order term in $\Lcurlybold$, and then replaced the partial differential operators $\Lbold$ and $\Tbold$ by their ODE analogues  $\Lbold\left(\imath\xibold,\frac{d}{dx_m}\right)$ and $\Tbold\left(\imath\xibold,\frac{d}{dx_m}\right)$, respectively, with $\xibold\in\mathbb{R}^{m-1}$, and in which each differentiation with respect to $x_j$ ($j=1, \dots, m-1$) is replaced by multiplication by $\imath\xi_j$; notice that differentiation with respect to $x_m$ is preserved.  We then solve \eqref{eq:elasticmixednonhomODE} in the half-space looking for a solution $\ubold(\xibold, x_m)$ such that $\lim\limits_{x_m\to-\infty} \ubold=\zerobold$; then the principal symbol of $\ \DtN_{\ip}$ (which is in fact independent of $\omega$) is obtained from the relation on $x_m=0$, see e.g. \cite{SaVa, Tay},
 \[
\Pn\Tbold\left(\imath\xibold,\frac{d}{dx_m}\right)\ubold=(\operatorname{prin\,symb} \DtN_{\ip})(\xibold) f.
\]
For example, in the three-dimensional case the solution of \eqref{eq:elasticmixednonhomODE} is given by 
\[
\ubold(\xibold, x_3) =
\frac{f\mathrm{e}^{x_3 |\xibold|}}{2\mu(\lambda+\mu)}
\begin{pmatrix}
-\imath (\lambda+\mu)\xi_1 x_3\\
-\imath (\lambda+\mu)\xi_2 x_3\\
- (\lambda+\mu) x_3+\frac{\lambda+3\mu}{ |\xibold|}
\end{pmatrix},
\]
and the result follows by applying $\Tbold$ on $x_3=0$.
\end{proof}

As a corollary of Lemma \ref{lem:DtNprinsymb} and the self-adjointness of $\DtN_{\ip}$, we immediately obtain that its spectrum $\Spec(\DtN_{\ip})$ is discrete, semi-bounded below, and consists of isolated eigenvalues, $\Spec(\DtN_{\ip})=\{\alpha_{\DtN_{\ip}, 1}, \alpha_{\DtN_{\ip}, 2},\dots\}$, counted with multiplicities,  with the only accumulating point at $+\infty$. For $\ip\not\in\Spec({\Lcurlybold^\mix})$, these eigenvalues can be found using the minimax principle
\begin{equation}\label{eq:DtNminmax}
\alpha_{\DtN_{\ip}, j}=\inf_{\substack{\mathcal{H}\subset H^{1/2}(\Gamma)\\\dim \mathcal{H}=j}}\ \sup_{\substack{f\in\mathcal{H}\\f\ne 0}}\ \frac{\Scal{\DtN_{\ip}f,f}{L^2(\Gamma)}}{\|f\|^2_{L^2(\Gamma)}}.
\end{equation}
For $\ip\in\Spec(\Lcurlybold^\mix)$, one should restrict the spaces of admissible test-functions by requesting additionally $f=\Pn\ubold|_{\Gamma}$ to be orthogonal to $\Pn\Tbold\Phibold_{\ip}(\Lcurlybold^\mix)$ as discussed above.

Using  \eqref{eq:DtNbyparts}, we can re-state \eqref{eq:DtNminmax} as
\begin{equation}\label{eq:DtNminmax1}
\alpha_{\DtN_{\ip}, j}=\inf_{\substack{\tilde{\mathcal{H}}\subset \Hbold^{1}(\Lambda, \Gamma)\\\dim \tilde{\mathcal{H}}=j}}\ \sup_{\substack{\ubold\in\tilde{\mathcal{H}}\\\ubold\cdot\nbold\ne 0}}\ \frac{\Ecurly[\ubold,\ubold]-\ip\|\ubold\|^2_{\Lbold_{\rho_s}^2(\Omega)}}{\|\ubold\cdot \nbold\|^2_{L^2(\Gamma)}},
\end{equation}
where 
\[
\Hbold^1(\Lambda, \Gamma):=\{\ubold\in \Hbold^1(\Gamma): \Lcurlybold(\ip)\ubold=0\}.
\]

\begin{remark} For $\ip<\ip_{\mix,1}$ we can in fact further simplify \eqref{eq:DtNminmax1} by  replacing $\Hbold^{1}(\Lambda, \Gamma)$ in its statement with $\Hbold^1(\Gamma)$. This follows from the following simple observation: for any $\ip\in\mathbb{R}\setminus\Spec(\Lcurlybold^\mix)$ the space $\Hbold^1(\Gamma)$ can be decomposed into the direct (but not orthogonal) sum 
\[
\Hbold^1(\Gamma)= \Hbold^1(\Lambda, \Gamma)+ \Hbold^1_{\normal,0}(\Omega).
\]
Let us replace $\ubold\in  \Hbold^{1}(\Lambda, \Gamma)$ in  \eqref{eq:DtNminmax1} by $\ubold+\vbold\in \Hbold^1(\Gamma)$, where $\vbold\in \Hbold^1_{\normal,0}(\Omega)$. The denominator does not change, and the numerator after an integration by parts becomes
\[
\Ecurly[\ubold+\vbold,\ubold+\vbold]-\ip\|\ubold+\vbold\|^2_{\Lbold_{\rho_s}^2(\Omega)}=\left(\Ecurly[\ubold,\ubold]-\ip\|\ubold\|^2_{\Lbold_{\rho_s}^2(\Omega)}\right)+\left(\Ecurly[\vbold,\vbold]-\ip\|\vbold\|^2_{\Lbold_{\rho_s}^2(\Omega)}\right).
\]
The second term in the right-hand side is greater than or equal to $\left(\ip_{\mix,1}-\ip\right)\|\vbold\|^2_{\Lbold_{\rho_s}^2(\Omega)}$ by \eqref{eq:varprM}, which is in turn non-negative for $\vbold\in\Hbold^1_{\normal,0}(\Omega)\setminus\{\zerobold\}$ by our assumption on $\ip$. The minimisation then forces $\vbold=\zerobold$.
\end{remark}

We are now interested in the dependence of eigenvalues $\alpha_{\DtN_{\ip}, j}$ of $\DtN_{\ip}$ on the parameter $\ip$. The following result is almost a direct analogue for the corresponding result of Friedlander \cite{Friedlander} in the scalar case, see also a  further discussion in \cite{ArMa12} which in particular relaxes some of the smoothness conditions in  \cite{Friedlander}. The result can be also deduced from an abstract scheme of Safarov \cite{Saf08}. For an analogue for the full (matrix) DtN map in elasticity see \cite{Ann}.

\begin{lemma}\label{lem:FriedDtN} Assume that the Jones spectrum $\Spec(\Lcurlybold^\Jones)$ is empty. We have:
\begin{enumerate}
\item[{\normalfont (a)}] In every open interval of the $\ip$-real line not containing the points of  $\Spec({\Lcurlybold^\mix})$, each eigenvalue $\alpha_{\DtN_{\ip}, j}$ of $\DtN_{\ip}$ is a monotone decreasing continuous function of $\ip$.
\item[{\normalfont (b)}] Let $\ip_\Neu$ be an eigenvalue of multiplicity $M\ge 1$ of the Neumann elasticity problem \eqref{eq:elasticNeumann}.  
Then, exactly $M$ eigenvalue curves $\alpha_{\DtN_{\ip}}$ cross the line $\alpha=0$ from the positive into the negative half-plane at $\ip=\ip_{Neu}$.
\item[{\normalfont (c)}] Let $\ipmix$ denote an eigenvalue of multiplicity $M\ge 1$ of the mixed elasticity problem \eqref{eq:elasticmixed}. Then, exactly $M$ eigenvalue curves $\alpha_{\DtN_{\ip}}$ ``blow down'' to $-\infty$ as $\ip$ approaches $\ipmix$ from the left and ``blow up'' to $+\infty$ as $\ip$ approaches  $\ipmix$ from the right.
\end{enumerate}
\end{lemma}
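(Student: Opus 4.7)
The plan is to establish the three parts in sequence, using the variational characterisation \eqref{eq:DtNminmax1}, the Robin correspondence implicit in the excerpt (namely $\alpha\in\Spec(\DtN_\ip)$ iff $\ip\in\Spec(\Lcurlybold^{\Rob(\alpha)})$), and a meromorphic expansion of $\DtN_\ip$ around each mixed eigenvalue. For part (a), I would first observe that on any interval avoiding $\Spec(\Lcurlybold^\mix)$ the lifting problem \eqref{eq:elasticmixednonhom} is uniquely solvable with the solution depending analytically on $\ip$, so $\DtN_\ip$ is a holomorphic self-adjoint family in $\ip$; Kato's perturbation theory then delivers continuous (indeed real-analytic) eigenvalue branches. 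To get strict monotone decrease, I would apply a Hellmann--Feynman calculation: a normalised eigenvector $\ubold$ of the Robin-type formulation satisfies $\Ecurly[\ubold,\ubold]-\ip\|\ubold\|^2_{\Ltworho}=\alpha\|\Pn\ubold\|^2_{L^2(\Gamma)}$, and differentiation in $\ip$ (with first-order variations of $\ubold$ cancelling by the variational principle) yields $\alpha'(\ip)=-\|\ubold\|^2_{\Ltworho}/\|\Pn\ubold\|^2_{L^2(\Gamma)}<0$, the denominator being non-zero because the assumption $\ip\notin\Spec(\Lcurlybold^\mix)$ prevents $\ubold$ from being a mixed mode.

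For part (b), the key observation is that $0\in\Spec(\DtN_\ip)$ precisely when $\ip\in\Spec(\Lcurlybold^\Neu)$: if $\DtN_\ip f=\zerobold$ then the associated $\ubold_f$ satisfies $\Pn\Tbold\ubold_f=0$ together with $\Pbold_\tang\Tbold\ubold_f=\zerobold$, hence $\Tbold\ubold_f=\zerobold$ and $\ubold_f$ is a Neumann mode; conversely each Neumann mode produces an element of $\ker\DtN_\ip$ through its normal trace. I would then count multiplicities: the absence of Jones modes implies that the map $\Pn\colon\Phibold_{\ip_\Neu}(\Lcurlybold^\Neu)\to L^2(\Gamma)$ is injective (its kernel would consist of functions satisfying \eqref{eq:elasticJones}), so $\dim\ker\DtN_{\ip_\Neu}=M$. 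Strict monotonicity from (a) then forces exactly $M$ continuous branches to cross the horizontal axis transversally at $\ip=\ip_\Neu$.

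For part (c), I plan to extract a pole expansion of $\DtN_\ip$ near $\ipmix$. Let $\Phibold_1,\ldots,\Phibold_M$ be an $\Ltworho$-orthonormal basis of the mixed eigenspace at $\ipmix$, and set $h_n:=\Pn\Tbold\Phibold_n\in L^2(\Gamma)$. Writing $\ubold_f=\vbold_f+\ubold_0$ with a fixed lift $\ubold_0$ of $f$ and $\vbold_f\in\Hbold^1_{\normal,0}(\Omega)$, an integration-by-parts identity that uses $\Lcurlybold(\ipmix)\Phibold_n=\zerobold$, $\Pn\Phibold_n=0$, and $\Pbold_\tang\Tbold\Phibold_n=\zerobold$ gives the Fourier coefficients $\langle\vbold_f,\Phibold_n\rangle_{\Ltworho}=-\langle\ubold_0,\Phibold_n\rangle_{\Ltworho}+\langle h_n,f\rangle_{L^2(\Gamma)}/(\ip-\ipmix)$, so that
\[
\DtN_\ip=\frac{1}{\ip-\ipmix}\,\mathcal{R}+\DtN_\ip^{\mathrm{reg}},\qquad \mathcal{R}f:=\sum_{n=1}^M \langle h_n,f\rangle_{L^2(\Gamma)}\,h_n,
\]
with $\DtN_\ip^{\mathrm{reg}}$ bounded uniformly in a neighbourhood of $\ipmix$. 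The residue $\mathcal{R}$ is positive semi-definite and of rank exactly $M$: any linear dependence $\sum c_n h_n=0$ would make $\sum c_n\Phibold_n$ a mixed mode with vanishing normal traction, i.e.\ a Jones mode, contradicting the standing hypothesis. Standard perturbation theory for a meromorphic self-adjoint family with a rank-$M$ positive residue then produces exactly $M$ eigenvalue branches behaving like $\beta_k/(\ip-\ipmix)$ with $\beta_k>0$, blowing down to $-\infty$ as $\ip\uparrow\ipmix$ and up to $+\infty$ as $\ip\downarrow\ipmix$, while the remaining branches of $\DtN_\ip$ stay bounded near $\ipmix$.

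I expect the main obstacle to be the rigorous separation-of-spectrum step in (c): isolating precisely $M$ divergent branches from a meromorphic self-adjoint family typically needs a Schur-complement or Grushin-type reduction onto the $M$-dimensional singular subspace spanned by $\{h_n\}$, together with a uniform bound on $\DtN_\ip^{\mathrm{reg}}$ to control the complementary spectrum. A subsidiary technical issue is the correct interpretation of $\DtN_\ip$ for $\ip\in\Spec(\Lcurlybold^\mix)$ as described in the excerpt; the no-Jones hypothesis is exactly what makes the orthogonality condition defining the reduced domain compatible with the $h_n$-based analysis, so that both ends of the blow-up in (c) can be read off the same pole expansion.
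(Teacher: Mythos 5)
Your proposal is correct, and for parts (a) and (c) it takes a genuinely different route from the paper. For (a), the paper argues via the duality $\alpha\in\Spec(\DtN_\ip)\iff\ip\in\Spec(\Lcurlybold^{\Rob(\alpha)})$: non-increase of $\alpha_{\DtN_\ip,j}$ in $\ip$ follows from non-increase of the Robin eigenvalues in $\kappa$, and strictness follows because a constant branch $\alpha^*$ on an interval would force $[\ip_1^*,\ip_2^*]\subset\Spec(\Lcurlybold^{\Rob(\alpha^*)})$, contradicting discreteness; your Kato/Hellmann--Feynman derivative $\alpha'(\ip)=-\|\ubold\|^2_{\Ltworho}/\|\Pn\ubold\|^2_{L^2(\Gamma)}$ is precisely the Friedlander-style alternative the paper explicitly mentions as also workable, and it buys a quantitative rate at the cost of justifying differentiability of the branches at crossings. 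Part (b) is essentially the paper's argument ($0\in\Spec(\DtN_\ip)\iff\ip\in\Spec(\Lcurlybold^\Neu)$), with your injectivity-of-$\Pn$ count via the no-Jones hypothesis making explicit what the paper leaves terse. For (c) the paper does something much lighter: it proves the corresponding zero-crossing statement for the nnNtD map (each mixed eigenvalue gives exactly $M$ branches of $\NtD_\ip$ crossing $\alpha=0$ upward) and then reads off the blow-down/blow-up of $\DtN_\ip$ from the fact that the two maps are inverses of one another; your direct pole expansion $\DtN_\ip=(\ip-\ipmix)^{-1}\mathcal{R}+\DtN_\ip^{\mathrm{reg}}$ with positive semi-definite rank-$M$ residue is correct (I checked the sign: the cross terms in the quadratic form $\Ecurly[\ubold_f,\ubold_f]-\ip\|\ubold_f\|^2_{\Ltworho}$ flip the naive sign and yield $+(\ip-\ipmix)^{-1}\mathcal{R}$ with $\mathcal{R}\ge0$, giving the stated blow-down from the left and blow-up from the right), and the no-Jones hypothesis indeed guarantees $\operatorname{rank}\mathcal{R}=M$, but it forces you through the Grushin/separation-of-spectrum step you flag, which the paper's inversion trick avoids entirely. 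Note also that the paper, in proving (b) and (c), assumes for simplicity $\Spec(\Lcurlybold^\Neu)\cap\Spec(\Lcurlybold^\mix)=\emptyset$, a convenience you would likewise want to adopt so that $\DtN_\ip$ is defined without domain restriction at $\ip=\ip_\Neu$.
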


We will give the proof of parts (a) and (b) below, and postpone the proof of part (c) until the next subsection; in proving (b) and (c) we assume for simplicity the stronger condition $\Spec(\Lcurlybold^\Neu)\cap \Spec(\Lcurlybold^\mix)=\emptyset$. 

\begin{remark}\label{rem:Jones} A more precise version of statements (b) and (c) without an assumption of the absence of Jones eigenvalues is easily adapted from Friedlander's arguments and reads as follows: Let, for $\ip\in\mathbb{R}$, denote by $M_{\ip}^\Neu$, $M_{\ip}^\mix$ and  $M_{\ip}^\Jones$ the multiplicities of $\ip$ as an eigenvalue of $\Lcurlybold^\Neu$, $\Lcurlybold^\mix$ and $\Lcurlybold^{\Jones}$, respectively, where either multiplicity may be zero if $\ip$ is not a corresponding eigenvalue. Additionally, let
\[
N_-(\ip):=\#\left(\Spec(\DtN_{\ip})\cap(-\infty,0)\right)
\]
denote the number of \emph{negative} eigenvalues of $\DtN_{\ip}$. Then an analogue of \cite[Lemma 2.4]{Friedlander} states that one-sided limits
\[
N_-({\Lambda_0}\pm 0):=\lim\limits_{\ip\to{\Lambda_0}\pm0} N_-(\ip)
\]
exist for all ${\Lambda}_0\in\mathbb{R}$, and 
\[
N_-({\Lambda_0}+0)=N_-({\Lambda_0}-0)-M_{{\Lambda_0}}^\mix+M_{{\Lambda_0}}^\Neu.
\]
As an additional corollary, we immediately obtain the relation between the counting function  $N_-$ and the counting functions $N^\Neu$ and $N^\mix$ introduced in \eqref{eq:NLambda}: for any $\ip\in\mathbb{R}$, 
\[
N_-(\ip)=N^\Neu(\ip)-N^\mix(\ip).
\]
Moreover, at each eigenvalue $\ip\in\Spec(\Lcurlybold^\Neu)\cup\Spec(\Lcurlybold^\mix)$ of either the Neumann elasticity problem or the mixed elasticity problem, exactly $M_{\ip}^\Neu-M_{\ip}^\Jones$ eigenvalue curves $\alpha_{\DtN_{\ip}}$ cross the line $\alpha=0$ from the positive into the negative half-plane, and exactly $M_{\ip}^\mix-M_{\ip}^\Jones$ curves ``blow down'' to $-\infty$ as $\ip$ approaches  $\Lambda_0$ from the left and ``blow up'' to $+\infty$ as $\ip$ approaches  ${\Lambda_0}$ from the right. 

For an illustration of the behaviour of the eigenvalues of the nnDtN map for a disk see Figures \ref{fig:fig1a} and \ref{fig:fig1b}, with explicit expression for the eigenvalues given in \S\ref{app:A}. 
\end{remark}

\begin{example} 
We further illustrate Remark \ref{rem:Jones} by looking at the value $\Lambda=0$ in the case of a unit disk. We see that $\ip=0$ is the eigenvalue of multiplicity three of the Neumann problem $\Lcurlybold^\Neu$, and the eigenvalue of multiplicity one of the mixed problem $\Lcurlybold^\mix$. The eigenmodes of $\Lcurlybold^\Neu$ corresponding to the eigenvalue $\Lambda=0$ are
\[
\mathbf{e}_\theta\qquad\text{and}\qquad (\mathbf{e}_r\pm\mathrm{i} \mathbf{e}_\theta)\mathrm{e}^{\pm \mathrm{i}\theta},
\]
in polar coordinates $(r,\theta)$ with the coordinate vectors $\mathbf{e}_r$ and $\mathbf{e}_\theta$. The first of these eigenvectors is simultaneously an eigenvector of the mixed problem $\Lcurlybold^\mix$, and the other two are not. Therefore $\Lambda=0$ is a Jones eigenvalue of multiplicity one for the disk; according to Remark \ref{rem:Jones}, we conclude that exactly $M_{0}^\Neu-M_{0}^\Jones=2$ eigenvalue curves $\alpha_{\DtN_{\ip}}$ cross the line $\alpha=0$ from the positive into the negative half-plane at $\Lambda=0$, and  (as $M_{\ip}^\mix-M_{\ip}^\Jones=0$) no eigenvalue curves blow up there, cf. Figure  \ref{fig:fig1b}. 
\end{example}

\begin{figure}[!hbt]
\begin{center}
\includegraphics{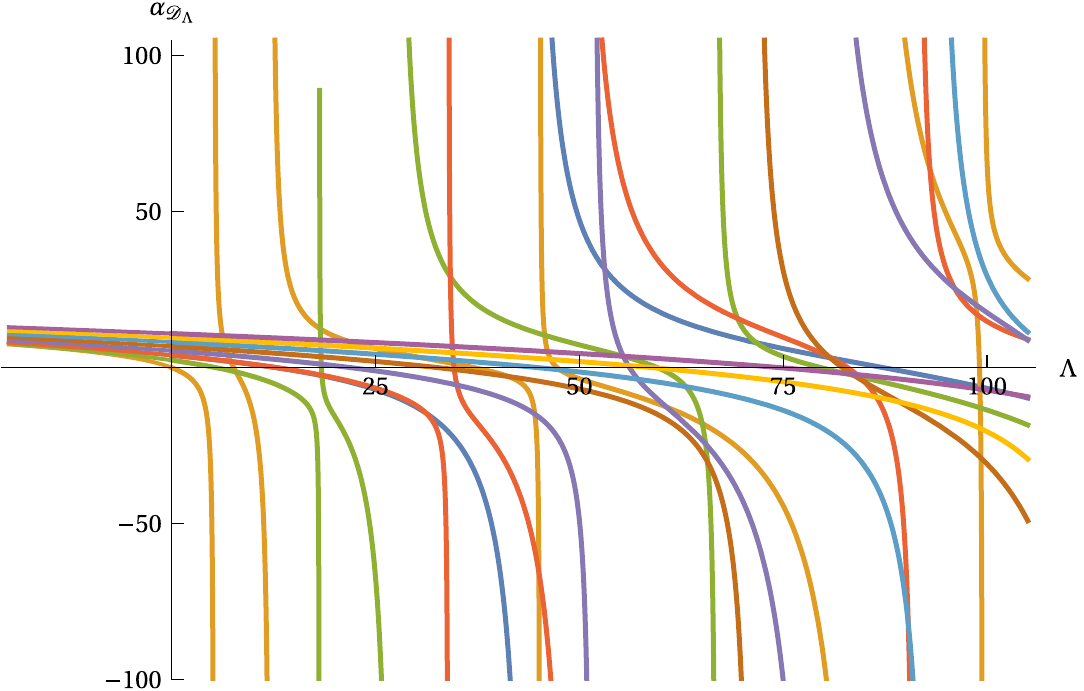}
\caption{Some eigenvalues $\alpha_{\DtN_{\ip}}$ of the nnDtN map for the unit disk as functions of $\ip$. Here $\lambda=\mu=\rho_s=1$.\label{fig:fig1a}} 
\end{center}
\end{figure}

\begin{figure}[!hbt]
\begin{center}
\includegraphics[width=0.95\textwidth]{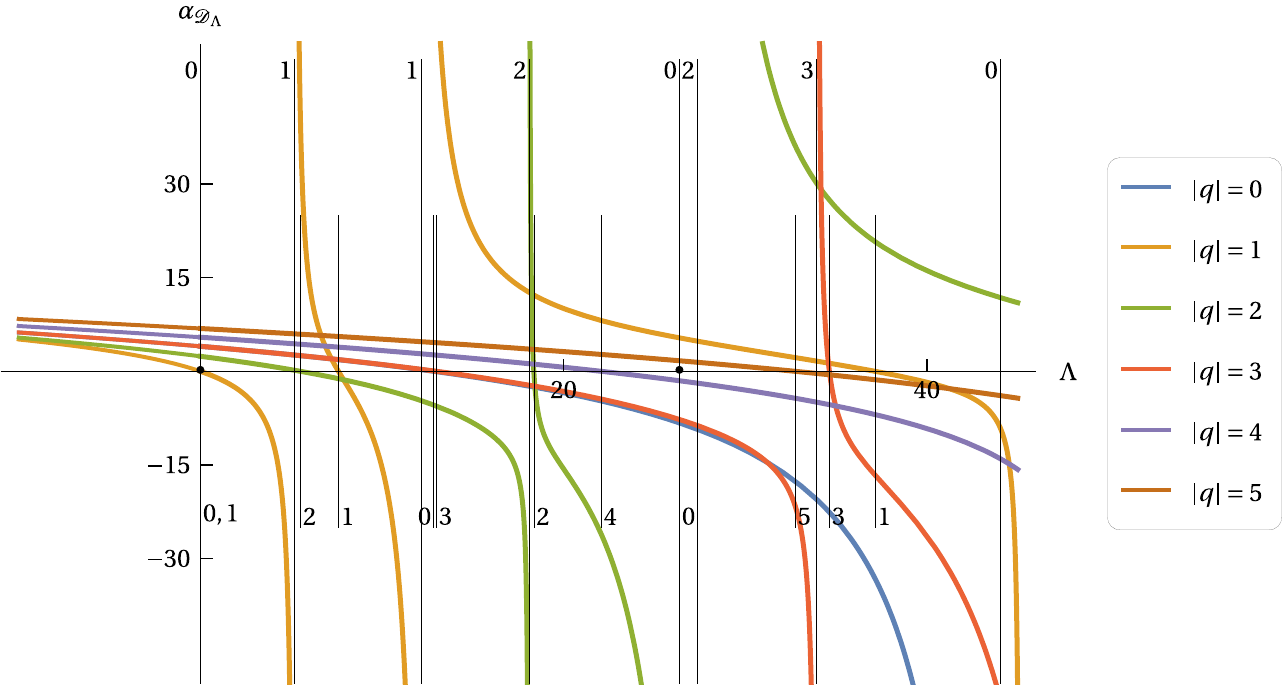}
\caption{Zoom of Figure \ref{fig:fig1a} that shows eigenvalues $\alpha_{\DtN_{\ip}}$ with the angular momentum $|q|\le 5$, see \S\ref{app:A} for details. The short vertical lines indicate the positions of the Neumann eigenvalues $\ip_\Neu$ (labelled with a corresponding angular momentum $|q|$), where the  curves  $\alpha_{\DtN_{\ip}}$ corresponding to the eigenmodes with the same angular momentum cross the line $\ip=0$. The long vertical lines indicate the positions of the mixed eigenvalues $\ip_\mix$ (labelled with a corresponding angular momentum $|q|$), where the  curves  $\alpha_{\DtN_{\ip}}$ corresponding to the eigenmodes with the same angular momentum blow up. All the curves and eigenvalues corresponding to $|q|>0$ are in fact double. We note that $\ip=0$ and $\ip\approx26.3746$, marked with black dots, are Jones eigenvalues for the disk with the angular momentum of the corresponding eigenmode being $q=0$; therefore  the corresponding curves $\alpha_{\DtN_{\ip}}$ have neither a zero nor a singularity at these values of $\ip$, see Remark \ref{rem:Jones}. \label{fig:fig1b}} 
\end{center}
\end{figure}

\begin{proof}[Proof of Lemma \ref{lem:FriedDtN}]
We prove the statement in part (a) by mimicking the reasoning in \cite{ArMa12}; a different approach similar to \cite{Friedlander} works as well.  We observe the following duality between the spectral problem 
\begin{equation}\label{eq:specforDtN}
\begin{cases}
\Lcurlybold(\ip)\ubold=\zerobold\qquad&\text{in }\Omega,\\
\Pbold_\tang\Tbold\ubold=\zerobold\qquad&\text{on }\Gamma,\\
\Pn\Tbold\ubold=\alpha \Pn(\ubold|_{\Gamma})\qquad&\text{on }\Gamma.
\end{cases}
\end{equation}
for the nnDtN map $\DtN_{\ip}$ (where $\ip$ is fixed and $\alpha$ is treated  as the spectral parameter) and the mixed Robin spectral problem \eqref{eq:elasticRobin} (where $\kappa$ is fixed and $\ip$ is treated as the spectral parameter). Namely, we have, for $\Lambda\not\in\Spec(\Lcurlybold^\mix)$,
\[
\alpha\in\Spec(\DtN_\ip)\qquad\iff\qquad \ip\in  \Spec(\Lcurlybold^{\Rob(\alpha)}). 
\] 
It is also easy to check the dimensions of the corresponding eigenspaces coincide.  Since the mixed Robin eigenvalues $\ip_{\Rob(\alpha), j}$ are non-increasing in $\alpha$, it immediately follows that the nnDtN eigenvalues $\alpha_{\DtN_\ip, j}$ are non-increasing in $\ip$ in each interval not containing points of $\Spec(\Lcurlybold^\mix)$. To prove the strict monotonicity, assume for contradiction that for some $j$ and $\ip_1^*<\ip_2^*$ we have 
\[
\alpha_{\DtN_\ip, j}=\alpha^*=\operatorname{const}\qquad\text{for }\ip\in[\ip_1^*,\ip_2^*].
\]
But then by duality 
\[
[\ip_1^*,\ip_2^*]\subset \Spec(\Lcurlybold^{\Rob(\alpha^*)}),
\]
which is impossible since the spectrum of $\Lcurlybold^{\Rob(\alpha^*)}$ is discrete. 

To prove (b), we compare  \eqref{eq:specforDtN} with \eqref{eq:elasticNeumann}: $\alpha=0$ is an eigenvalue of  $\DtN_{\ip}$ if and only if $\ip$ is an eigenvalue of $\Lcurlybold^\Neu$, and $\ubold$ is a corresponding eigenfunction.

Part (c) will follow from Lemma \ref{lem:FriedNtD} detailed below.
\end{proof}

\subsection{Normal-normal Neumann-to-Dirichlet map for elasticity}\label{subs:NtDmap}
By analogy with \S\ref{subs:DtNmap}, let us fix  $\ip\in\mathbb{R}$, for the moment such that $\ip\not\in\Spec(\Lcurlybold^\Neu)$ and consider  the following non-homogenous boundary value problem associated to \eqref{eq:elasticNeumann}:
\begin{equation}\label{eq:elasticNeunonhom}
\begin{cases}
\Lcurlybold(\ip)\ubold=\zerobold\qquad&\text{in }\Omega,\\
\Pbold_\tang\Tbold\ubold=\zerobold\qquad&\text{on }\Gamma,\\
\Pn\Tbold\ubold=f\qquad&\text{on }\Gamma.
\end{cases}
\end{equation}
For a given function $f\in H^{-1/2}(\Gamma)$ this problem has a unique solution $\ubold=\ubold_f\in \Hone$.  We can therefore define the \emph{normal-normal Neumann-to-Dirichlet map}  (abbreviated as \emph{nnNtD map}), $\NtD_{\ip}$ which sends the normal Neumann datum $f=\Tbold\ubold\cdot \nbold|_\Gamma$  of a solution $\ubold$ of \eqref{eq:elasticNeunonhom} into its normal Dirichlet datum $\Pn\ubold=\ubold\cdot \nbold|_\Gamma$. 
If $\ip\in \Spec({\Lcurlybold^\Neu})$, we can still define $\NtD_{\ip}$ in a similar manner, as long as  
it is restricted to the subspace of  $H^{-1/2}(\Gamma)$ which is an orthogonal complement in $L^2(\Gamma)$ to $\Pn\Phibold_{\ip}(\Lcurlybold^\Neu)|_\Gamma$.

For $\ip\not\in\Spec(\Lcurlybold^\Neu)\cup \Spec(\Lcurlybold^\mix)$, the nnDtN and the nnNtD maps are inverses of each other,
\[
\NtD_{\ip}\circ \DtN_{\ip}=\DtN_{\ip}\circ \NtD_{\ip}=\operatorname{Id}.
\]
This also holds for all $\ip$  if restricted to the corresponding domains whenever necessary. 
This fact, together with Lemmas \ref{lem:DtNprinsymb} and \ref{lem:FriedDtN},  imply the following result:
 \begin{lemma}\label{lem:FriedNtD} We have:
\begin{enumerate}
\item[{\normalfont (a)}] The nnNtD map $\NtD_{\ip}$ is an elliptic pseudodifferential operator of order minus one with the principal symbol 
\[
(\operatorname{prin\,symb} \NtD_{\ip})(\xibold)=\frac{\lambda+2\mu}{2\mu(\lambda+\mu)}\frac{1}{|\xibold|},\qquad\xibold\in\mathbb{R}^{m-1}\setminus\{\zerobold\}.
\]
\item[{\normalfont (b)}] For each $\ip\in\mathbb{R}$, the spectrum $\Spec(\NtD_{\ip})$ consists of isolated eigenvalues of finite multiplicity with the only accumulation point at $+0$.
\item[{\normalfont (c)}] In every open interval of the $\ip$-real line not containing the points of  $\Spec({\Lcurlybold^\Neu})$, each eigenvalue $\alpha_{\NtD_{\ip}, j}$ of $\NtD_{\ip}$ is a monotone increasing continuous function of $\ip$.
\item[{\normalfont (d)}] Let $\ipmix$ stand for an eigenvalue of multiplicity $M\ge 1$ of the mixed elasticity problem  \eqref{eq:elasticmixed}. 
Then, exactly $M$ eigenvalue curves $\alpha_{\NtD_{\ip}}$ cross the line $\alpha=0$ from the negative into the positive half-plane at $\ip=\ipmix$.
\item[{\normalfont (e)}] Let  $\ip_\Neu$ be an eigenvalue  of multiplicity $M\ge 1$ of the Neumann elasticity problem  \eqref{eq:elasticNeumann}. Then exactly $M$ eigenvalue curves $\alpha_{\NtD_{\ip}}$ ``blow up'' to $+\infty$ as $\ip$ approaches $\ip _\Neu$ from the left  
 and ``blow down'' to $-\infty$ as $\ip$ approaches $\ip _\Neu$ from the right. 
\end{enumerate}
 \end{lemma}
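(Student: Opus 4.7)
The plan is to derive all five assertions from the corresponding facts already established for the DtN map (Lemmas \ref{lem:DtNprinsymb} and \ref{lem:FriedDtN}) by exploiting the inversion identity $\NtD_{\ip}\circ\DtN_{\ip} = \DtN_{\ip}\circ\NtD_{\ip} = \operatorname{Id}$ stated just above the lemma, valid on the appropriate restricted subspaces whenever $\ip$ lies in $\Spec(\Lcurlybold^\Neu)\cup\Spec(\Lcurlybold^\mix)$.

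For part (a) I would invoke the standard symbolic calculus on the compact boundary $\Gamma$: the inverse (modulo smoothing operators) of an elliptic pseudodifferential operator of order $+1$ is pseudodifferential of order $-1$, with principal symbol equal to the pointwise reciprocal.  Since Lemma \ref{lem:DtNprinsymb} gives $(\operatorname{prin\,symb}\DtN_{\ip})(\xibold) = \frac{2\mu(\lambda+\mu)}{\lambda+2\mu}|\xibold|$, inversion delivers the formula claimed for $\NtD_{\ip}$.  Part (b) then follows because any pseudodifferential operator of negative order is compact on $L^2(\Gamma)$; self-adjointness is inherited from $\DtN_{\ip}$ (it can also be verified directly from a Green-type identity $\Scal{\NtD_{\ip}f,g}{L^2(\Gamma)} = \Ecurly[\ubold_f,\ubold_g] - \ip\Scal{\ubold_f,\ubold_g}{\Ltworho}$ applied to solutions of \eqref{eq:elasticNeunonhom}), so the spectrum consists of isolated real eigenvalues of finite multiplicity, and positivity of the principal symbol forces the accumulation point to be precisely $+0$.

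For parts (c), (d), (e) I would trace individual eigenvalue curves.  At every $\ip$ for which both operators are defined, their nonzero spectra are in bijection via $\alpha\leftrightarrow \alpha^{-1}$, with identical eigenspaces, since \eqref{eq:specforDtN} and the corresponding Neumann-type spectral problem share the interior PDE and the tangential boundary condition and differ only in whether the normal Dirichlet or the normal Neumann datum is prescribed.  By Lemma \ref{lem:FriedDtN}(a), each DtN curve $\alpha_{\DtN_{\ip}}$ is continuous and strictly decreasing on intervals avoiding $\Spec(\Lcurlybold^\mix)$; taking reciprocals gives (c), namely strict monotone increase of each NtD curve on intervals avoiding $\Spec(\Lcurlybold^\Neu)$.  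Crossings of DtN curves from the positive to the negative half-plane at Neumann eigenvalues (Lemma \ref{lem:FriedDtN}(b)) invert to blow-ups of NtD curves from $+\infty$ (approached from the left) to $-\infty$ (from the right), yielding (e); blow-downs/blow-ups of DtN curves at mixed eigenvalues (Lemma \ref{lem:FriedDtN}(c)) invert to sign changes of NtD curves through zero from negative to positive, yielding (d).  Multiplicities are preserved throughout by the eigenspace identification.

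The main obstacle is bookkeeping at the Neumann and mixed points themselves: at a Neumann eigenvalue $\DtN_{\ip}$ has a nontrivial kernel, so $\NtD_{\ip}$ is only defined on a hyperplane of $H^{-1/2}(\Gamma)$ and develops a divergent eigenvalue, and symmetrically at mixed eigenvalues.  One must track the restricted-subspace definitions consistently when asserting the inversion identity and when matching multiplicities, in the same spirit as Remark \ref{rem:Jones} on the DtN side.  Beyond this bookkeeping, no further analytical input is needed: given Lemma \ref{lem:FriedDtN}, the entire statement is a formal consequence of the inversion relationship.
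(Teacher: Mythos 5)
Your treatment of parts (a), (b), (c) and (e) matches the paper's: the authors likewise obtain (a)--(b) from Lemma \ref{lem:DtNprinsymb} via the inversion identity, (c) from Lemma \ref{lem:FriedDtN}(a), and (e) from Lemma \ref{lem:FriedDtN}(b), and your extra detail on compactness, self-adjointness and the sign of the accumulation point is correct.

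There is, however, a circularity in your argument for part (d). You derive (d) by inverting the blow-up behaviour asserted in Lemma \ref{lem:FriedDtN}(c); but the paper explicitly \emph{postpones} the proof of Lemma \ref{lem:FriedDtN}(c), stating that it ``will follow from Lemma \ref{lem:FriedNtD}''. In the paper's logical order, Lemma \ref{lem:FriedDtN}(c) is a \emph{consequence} of Lemma \ref{lem:FriedNtD}(d), not an available input to it, so citing it here proves nothing. The fix is the one the paper uses: prove (d) directly by writing out the spectral problem for $\NtD_{\ip}$ with $\alpha$ as the spectral parameter, namely $\Lcurlybold(\ip)\ubold=\zerobold$ in $\Omega$, $\Pbold_\tang\Tbold\ubold=\zerobold$ and $\Pn\ubold=\alpha\,\Pn\Tbold\ubold$ on $\Gamma$, and observe that $\alpha=0$ occurs exactly when $\ip\in\Spec(\Lcurlybold^\mix)$, with $\ubold$ a corresponding eigenfunction (so the multiplicity count is immediate); combined with the monotonicity and continuity from part (c) on either side of $\ipmix$, this gives the crossing from negative to positive. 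Once (d) is established this way, the blow-up statement for the DtN curves at mixed eigenvalues follows by inversion --- i.e., the implication runs in the direction opposite to the one you used. Your remarks on the bookkeeping at the exceptional points (restricted domains, Jones modes) are otherwise in the same spirit as Remark \ref{rem:Jones} and are appropriate.
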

 
 \begin{remark} In what follows, we will be mostly interested in the eigenvalues of the nnNtD map; for brevity, we will from now on call them \emph{elasticity impedance eigenvalues}. We note that in some previous works in scalar context, the eigenvalues of the Neumann-to-Dirichlet map were called \emph{Steklov eigenvalues}. Traditionally, this is not entirely correct as this term is reserved for the eigenvalues  of the Dirichlet-to-Neumann map, strictly speaking also with $\Lambda=0$, and we will not use this terminology to avoid confusion.
\end{remark}

 \begin{proof}[Proof of Lemma \ref{lem:FriedNtD}] Since $\DtN_{\ip}$ and $\NtD_{\ip}$ are inverses of one another, parts (a)--(b) follow immediately from Lemma \ref{lem:DtNprinsymb}, part (c) from Lemma \ref{lem:FriedDtN}(a), and part (e) from Lemma  \ref{lem:FriedDtN}(b). To prove part (d), we write down the spectral problem for $\NtD_{\ip}$ explicitly, using $\alpha$ as a spectral parameter:
\[
\begin{cases}
\Lcurlybold(\ip)\ubold=\zerobold\qquad&\text{in }\Omega,\\
\Pbold_\tang\Tbold\ubold=\zerobold\qquad&\text{on }\Gamma,\\
\Pn\ubold=\alpha \Pn\Tbold\ubold\qquad&\text{on }\Gamma.
\end{cases}
\]
Then  $\alpha=0$ if and only if $\ip\in\Spec(\Lcurlybold^\mix)$, with $\ubold$ being a corresponding eigenfunction. This also immediately implies the statement in Lemma \ref{lem:FriedDtN}(c).
\end{proof}

The following result gives an easy algorithm for computing the nnNtD map, simultaneously for all $\ip$'s, in an arbitrary basis on $\Gamma$, and is adapted from the scalar analogue in \cite{LeMa}.

\begin{lemma} Let $\{f_l \}_{l=1}^\infty$ be an arbitrary basis in $\Hbold^{-1/2}(\Gamma)$. Then the matrix elements 
\[
(\NtD_{\ip})_{l,l'}:=\Scal{\NtD_\ip f_l, f_{l'}}{L^2(\Gamma)}
\]
of the nnNtD map in this basis are given by
\[
(\NtD_{\ip})_{l,l'}=\sum_{j=1}^\infty \frac{1}{\Lambda_{\Neu,j}-\ip} \Scal{f_l, \Pn\Ubold_j|_\Gamma}{L^2(\Gamma)}\Scal{\Pn\Ubold_j|_\Gamma,f_{l'}}{L^2(\Gamma)},
\]
where $\Ubold_j(\xbold)$ are the eigenvectors of the Neumann problem \eqref{eq:elasticNeumann} corresponding to the eigenvalues $\Lambda_{\Neu,j}$ and orthonormalised by
\[
\Scal{\Ubold_j,\Ubold_{j'}}{\Lbold^2_{\rho_s}(\Omega)}=\delta_{j,j'}.
\]
\end{lemma}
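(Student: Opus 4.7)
The plan is to derive the spectral representation of $\NtD_\ip$ by expanding the solution of the non-homogeneous Neumann problem in the basis of Neumann eigenfunctions and reading off the matrix elements.

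First I would fix $l$ and let $\ubold_l\in\Hone$ denote the unique solution of \eqref{eq:elasticNeunonhom} with datum $f_l$ (with the usual caveat on the orthogonal complement when $\ip\in\Spec(\Lcurlybold^\Neu)$), so that by definition $\NtD_\ip f_l=\Pn\ubold_l|_\Gamma$. Since the Neumann eigenfunctions $\{\Ubold_j\}$ form an $\Lbold^2_{\rho_s}(\Omega)$-orthonormal basis (by the discreteness of $\Spec(\Lcurlybold^\Neu)$ established earlier), I would write the expansion
\[
\ubold_l=\sum_{j=1}^\infty c_{l,j}\,\Ubold_j,\qquad c_{l,j}=\Scal{\ubold_l,\Ubold_j}{\Lbold^2_{\rho_s}(\Omega)},
\]
convergent at least in $\Lbold^2_{\rho_s}(\Omega)$.

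Next I would compute $c_{l,j}$ via two applications of Green's formula \eqref{eq:Lbyparts}. On the one hand, testing $\Lbold\Ubold_j=-\rho_s\Lambda_{\Neu,j}\Ubold_j$ against $\ubold_l$ and using $\Tbold\Ubold_j=\zerobold$ yields $\Ecurly[\Ubold_j,\ubold_l]=\Lambda_{\Neu,j}\Scal{\Ubold_j,\ubold_l}{\Lbold^2_{\rho_s}(\Omega)}$. On the other hand, testing $\Lbold\ubold_l=-\rho_s\ip\,\ubold_l$ against $\Ubold_j$ and using $\Pbold_\tang\Tbold\ubold_l=\zerobold$ together with $\Pn\Tbold\ubold_l=f_l$ gives
\[
-\Ecurly[\ubold_l,\Ubold_j]+\Scal{f_l,\Pn\Ubold_j|_\Gamma}{L^2(\Gamma)}=-\ip\Scal{\ubold_l,\Ubold_j}{\Lbold^2_{\rho_s}(\Omega)}.
\]
Combining the two identities via the Hermiticity of $\Ecurly$ produces the explicit formula
\[
c_{l,j}=\frac{\Scal{f_l,\Pn\Ubold_j|_\Gamma}{L^2(\Gamma)}}{\Lambda_{\Neu,j}-\ip}.
\]

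Finally, taking the trace and pairing with $f_{l'}$, the matrix element is
\[
(\NtD_\ip)_{l,l'}=\Scal{\Pn\ubold_l|_\Gamma,f_{l'}}{L^2(\Gamma)}=\sum_{j=1}^\infty c_{l,j}\Scal{\Pn\Ubold_j|_\Gamma,f_{l'}}{L^2(\Gamma)},
\]
which after substitution gives exactly the stated formula. The main obstacle I expect is not the algebra, which is transparent, but rather the rigorous justification of interchanging the trace/summation and of the convergence of the series: one has to verify that the expansion of $\ubold_l$ converges in a strong enough topology (for example in $\Hone$ modulo the eigenspace at $\ip$) so that the trace operator may be applied termwise, and to show that the series on the right-hand side converges in $\mathbb{C}$. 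The latter follows from the fact that $\Pn\Ubold_j|_\Gamma$ lies in $H^{1/2}(\Gamma)$ with Weyl-type growth of $\Lambda_{\Neu,j}$, combined with the decay of $\Scal{f_l,\Pn\Ubold_j|_\Gamma}{L^2(\Gamma)}$, so absolute convergence of the double series can be obtained by a Cauchy--Schwarz estimate; in the exceptional case $\ip\in\Spec(\Lcurlybold^\Neu)$ the corresponding numerators vanish by the orthogonality restriction on the domain of $\NtD_\ip$, and the terms are interpreted as zero.
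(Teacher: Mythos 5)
Your proposal is correct and follows essentially the same route as the paper: expand $\ubold_l$ in the $\Lbold^2_{\rho_s}$-orthonormal Neumann eigenbasis and compute the coefficients $\Scal{\ubold_l,\Ubold_j}{\Lbold^2_{\rho_s}(\Omega)}=\frac{1}{\Lambda_{\Neu,j}-\ip}\Scal{f_l,\Pn\Ubold_j|_\Gamma}{L^2(\Gamma)}$ by integration by parts. The only (cosmetic) difference is that the paper first rewrites $(\NtD_\ip)_{l,l'}$ as $\Ecurly[\ubold_{l,\ip},\ubold_{l',\ip}]-\ip\Scal{\ubold_{l,\ip},\ubold_{l',\ip}}{\Lbold^2_{\rho_s}(\Omega)}$ and expands inside that sesquilinear form, whereas you pair the trace of the expansion directly with $f_{l'}$ — which is why you (rightly) flag the termwise-trace/convergence issue that the paper's formulation sidesteps.
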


\begin{proof} Using Green's formula \eqref{eq:Lbyparts}, we have
\begin{equation}\label{eq:Lbyparts1}
(\NtD_{\ip})_{l,l'}=E\left(\ubold_{l,\ip},\ubold_{l',\ip}\right)-\Lambda\Scal{\ubold_{l,\ip},\ubold_{l',\ip}}{\Lbold^2_{\rho_s}(\Omega)},
\end{equation}
where $\ubold=\ubold_{l,\ip}(\xbold)$ is the solution of \eqref{eq:elasticNeunonhom} with $f=f_l$. We now use the fact that the set of Neumann eigenfunctions $\{\Ubold_j\}$ is a basis in $\Hbold^1(\Omega)$, and we can therefore expand each $\ubold_{l,\ip}$ as
\[
\ubold_{l,\ip}(\xbold)=\sum_{j=1}^\infty \Scal{\ubold_{l,\ip},\Ubold_j}{\Lbold^2_{\rho_s}(\Omega)} \Ubold_j(\xbold).
\]
Substituting this into \eqref{eq:Lbyparts1} and using Green's formula once more and the normalisation condition gives
\[
(\NtD_{\ip})_{l,l'}=\sum_{j=1}^\infty\left(\Lambda_{\Neu,j}-\ip\right)\Scal{\ubold_{l,\ip},\Ubold_j}{\Lbold^2_{\rho_s}(\Omega)}\Scal{\Ubold_j,\ubold_{l',\ip}}{\Lbold^2_{\rho_s}(\Omega)}.
\]
Another integration by parts gives
\[
\Scal{\ubold_{l,\ip},\Ubold_j}{\Lbold^2_{\rho_s}(\Omega)}=\frac{1}{\Lambda_{\Neu,j}-\ip}\Scal{f_l, \Pn\Ubold_j|_\Gamma}{L^2(\Gamma)},
\]
and the result follows.
\end{proof}

\subsection{{Dependence of the eigenvalues of the nnDtN map on the Lam\'{e} parameters}}

{Similarly to what we have done the previous sections, we can consider a family of normal-normal Dirichlet-to-Neumann maps for a fixed $\Lambda$  but varying the Lam\'{e} coefficients $\lambda$, $\mu$. This is of interest for the inverse problem, since shifts in measured eigenvalues could be correlated with changes in elasticity constants. The behaviour of eigenvalues of this family is the same as before, the only difference being that they are monotone \emph{increasing} functions of $\lambda$ and $\mu$ on the intervals of continuity (and some of them blow up at the values $\lambda_0$, $\mu_0$ for which $\Lambda$ becomes a Dirichlet eigenvalue  of $\mathcal{L}(\Lambda)$).  This follows from exactly the same argument, the only difference being that the form \eqref{eq:DtNminmax} is monotone increasing in $\lambda$ and $\mu$. As an illustration, we show plots of  some eigenvalues of the nnDtN map for the unit disk  as  functions of either $\lambda$ or $\mu$ in Figure \ref{fig:figlambdamu}.}

\begin{figure}[!hbt]
\begin{center}
\includegraphics[width=0.95\textwidth]{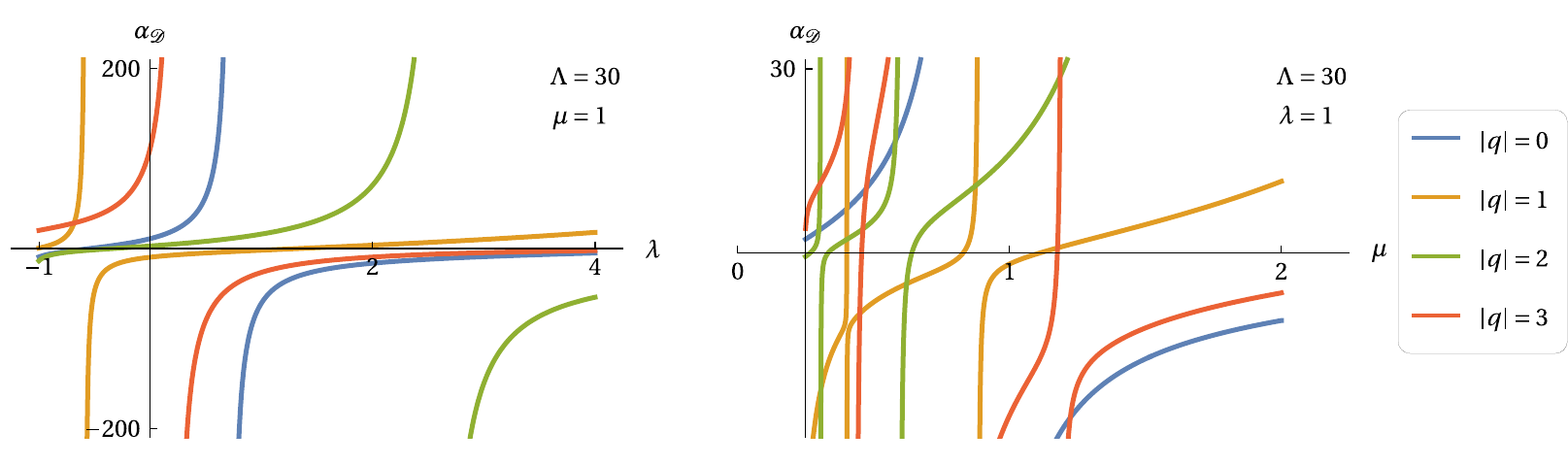}
\caption{Some eigenvalues $\alpha_{\DtN_{\ip}}$ of the nnDtN map for a unit disk as functions of $\lambda$ (left figure) and $\mu$ (right figure) for fixed $\Lambda=30$.\label{fig:figlambdamu}}
\end{center}
\end{figure}
 
 \section{The forward fluid-solid interaction problem}\label{sec:directFSproblem}
We now consider the fluid-solid interaction problem.  We will apply the previous theory when we propose
our target signatures for this problem. As before we consider a bounded elastic body, but now immersed in a compressible, inviscid fluid occupying the exterior domain
\[
\Omega_e := \mathbb{R}^m \backslash \overline{ \Omega}. 
\]
When an incident acoustic wave in the fluid strikes the body, part of its energy is transmitted into the structure in the form of (small) vibrations; in turn, the structure's vibrations produce acoustic waves in the fluid. The forward fluid-solid interaction problem consists of determining the response of the system under the assumption that the geometry and properties of the solid and the fluid, as well as the incident sound wave, are known.  We now recall a standard mathematical formulation of this problem~\cite{hsi00} where we assume that  that the wave is of small amplitude, and the fluid and target motions are time-harmonic, so that we may work in the frequency domain.

We denote the wavenumber in the fluid by 
 \[
k := \frac{\omega}{c_f}\in \mathbb{R},
\] 
where $\omega $ is the angular frequency of the fields and $c_f $ is the speed of sound in the fluid ($\omega$ and $c_f$ are positive real constants). In the fluid the density is a constant $\rho_f \in \mathbb{R}$ such that $\rho _f >0$.

Let $p^i$ represent the incident wave, which must  be a  smooth solution of the Helmholtz equation in a neighbourhood of
$\Omega$ for the given wavenumber $k$.
In this paper $p^i$ is usually chosen to be a propagating plane wave:
\begin{equation}
p^i(\xbold)= p^i(\xbold,\dbold) := {\mathrm{e}}^{\imath k\xbold\cdot \dbold} \qquad\mbox{in }\mathbb{R}^m \, ,\label{incident}
\end{equation}
where $\dbold\in\mathbb{S}^{m-1} =\{\xbold\in\mathbb{R}^m \, ; \,\, |\xbold|=1 \} $ is the direction of propagation. Note that any plane wave satisfies the Helmholtz equation in the whole space $\mathbb{R}^m$.

Under the previous hypotheses, the elastodynamic displacement field $\ubold \in \Hone$ and the dynamic component of the fluid pressure $p\in H_{\mathrm{loc}}^1( \Omega_e)$  solve the system \begin{subnumcases}{\label{eq:PM}}
\nabla \cdot \sigma (\ubold) + \rho_s \Lambda \ubold = \zerobold &in  $\Omega$,\label{eq:PMa}\\
\displaystyle \Delta p + k^2 p = 0 &in $\Omega_e$,\label{eq:PMb}\\
\Tbold\ubold = - p \,\nbold &on  $\Gamma$,\label{eq:PMc}\\
\ubold \cdot \nbold =\frac{1}{\rho_f\Lambda} \frac{\partial p}{\partial \nbold} &on $\Gamma$,\label{eq:PMd}\\
p = p^i + p^s,\quad\text{with}\quad\frac{\partial p^s}{\partial r}  - \imath k p^s = \mathrm{o}\left( r^{-(m-1)/2}\right)&as $r\to\infty$.\label{eq:PMe}
\end{subnumcases}

Here \eqref{eq:PMb} is the  acoustic equations in the time-harmonic regime, respectively. The transmission conditions \eqref{eq:PMc} and \eqref{eq:PMd} on $\Gamma$ are the  dynamic and kinematic boundary conditions, which represent the equilibrium of forces and the equality of the normal displacements (of the solid and the fluid), respectively
\cite{hsi00}. Equation \eqref{eq:PMe} states that the fluid pressure $p$  is the superposition of the given incident field $p^i$ and an unknown scattered wave $p^s$. The latter is selected to be an outgoing wave by the decay condition at infinity: this must hold uniformly in all directions $\hat{\xbold }:=\xbold /r$ when $r = |\xbold | \to \infty$ and is known as the \emph{Sommerfeld radiation condition}. For a complete description of the derivation of this model see \cite{hsi00} or \cite[\S 2]{luke+martin} and the references therein.

Under our assumptions, it is well known that  \eqref{eq:PM} has at most one solution $p$; however, to prove existence of $p$ and $\ubold$, we need to assume additionally that $\omega$ is not a  \emph{Jones frequency} for the solid. The definition of Jones frequencies can be found at the end of \S\ref{subs:bvpelasticity}, and throughout the remainder of this paper we shall assume that $\omega$ is \emph{not} a Jones frequency for $\Omega$. 

It is useful to note that we can immediately reduce \eqref{eq:PM} to the  exterior domain only, with the solid behaviour incorporated via the operator $\NtD_{\ip}$ for $\ip=\omega^2$. Namely, comparing equations \eqref{eq:PMa} and \eqref{eq:PMc},  re-written as 
\begin{equation}\label{eq:elasticwithp}
\begin{cases}
\Lcurlybold(\Lambda)\ubold=\zerobold&\quad\text{in }\Omega,\\
\Pbold_\tang \Tbold\ubold=\zerobold&\quad\text{on }\Gamma,\\
\Pn\Tbold\ubold=-p&\quad\text{on }\Gamma, 
\end{cases}
\end{equation}
with the definition of the nnNtD map, we obtain
\[
\Pn (\ubold|_\Gamma)=\NtD_{\Lambda}\left(\Pn\Tbold\ubold\right)=\NtD_{\Lambda}(-p). 
\]
Substituting this into the remaining equations of \eqref{eq:PM} gives
\begin{subnumcases}{\label{eq:PM1}}
\Delta p + k^2 p = 0 &in $\Omega_e$,\\[.5ex]
\frac{\partial p}{\partial \nbold}+\rho_f\Lambda \NtD_{\Lambda}(p)=0&on $\Gamma$,\label{PM1b}\\[.5ex]
p = p^i + p^s,\quad\text{with}\quad\frac{\partial p^s}{\partial r}  - \imath k p^s = \mathrm{o}\left( r^{-(m-1)/2}\right)&as $r\to\infty$.
\end{subnumcases}

\subsection{The far field operator}\label{subsec:ffoperator}

Since $p^s$ is a radiating solution of the Helmholtz equation, it admits the asymptotic expansion (c.f. \cite{col19})
\[
p^s (\xbold) \, = \, \frac{\mathrm{e}^{\imath k r}}{r^{(m-1)/2}} \,\left(p^{\infty} (\hat{\xbold}
) + \mathrm{O} \!\left(\frac{1}{r^{(m+1)/2}}\right)\right) \quad \mbox{as } r\to\infty \, .
\]
The function $p^{\infty}$ is called the \emph{far field pattern} of the scattered field and can be written in terms of the scattered field in integral form:
\[
p^{\infty} (\hat{\xbold}
) = - c_m\, \int\limits_{\tilde \Gamma} \left( \imath k p^s (\ybold) \nbold(\ybold) \cdot \hat{\xbold} + \frac{\partial p^s}{\partial \nbold} (\ybold) \right) \, {\mathrm{e}}^{-\imath k\ybold\cdot \hat{\xbold}} \,dS_{\ybold} \, ,
\]
see \cite[Eqs. (4.5-6)]{cakoni+colton} or \cite[Eqs. (6-7)]{elschner+hsiao+rathsfeld}. In the expression above, $\tilde \Gamma  = \partial \tilde \Omega$ is the boundary of any bounded regular domain $\tilde \Omega \subseteq \mathbb{R}^m$ that contains $\Omega$ (possibly $\tilde \Omega = \Omega$), and 
\[
c_m :=
\begin{cases}
\frac{\mathrm{e}^{\imath \pi/4}}{\sqrt{8\pi k}} & \quad\text{if $m=2$,}\\[1ex]
\frac{1}{4\pi } & \quad \text{if $m=3$}.
\end{cases}
\]
Note that the standard Rellich's Lemma holds in the fluid so guaranteeing uniqueness of the pressure field there  (see e.g. \cite[Th. 4.1]{cakoni+colton}) as stated next.

\begin{lemma}\label{cor_ffid} 
If $p^{\infty} (\hat{\xbold}) = 0$ for all $\hat{\xbold} \in \mathbb{S} ^{m-1}$, then $p^s (\xbold) = 0$ for $\xbold\in\Omega_e$.
\end{lemma}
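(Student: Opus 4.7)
The plan is to reduce the claim to the classical Rellich uniqueness theorem for the Helmholtz equation and then close with a unique-continuation argument, exploiting the standing assumption that $\Omega_e = \mathbb{R}^m \setminus \overline{\Omega}$ is connected. Recall that Rellich's theorem asserts: if $v$ solves $\Delta v + k^2 v = 0$ in the exterior of some ball $|\xbold| > R_0$ and satisfies
\[
\lim_{r \to \infty} \int_{|\xbold| = r} |v(\xbold)|^2 \, dS_{\xbold} = 0,
\]
then $v \equiv 0$ for $|\xbold| > R_0$. I would take this as a black box, the usual proof being separation of variables: expand $v$ in the exterior of the ball as a series in outgoing spherical (respectively cylindrical) wave functions with Hankel radial factors, and exploit the $L^2(\mathbb{S}^{m-1})$ orthonormality of the spherical harmonics (respectively Fourier modes) together with the $r\to\infty$ asymptotics of $h_n^{(1)}(kr)$ to read off that every coefficient must vanish.

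The first concrete step is to translate the hypothesis $p^\infty \equiv 0$ into the decay assumption of Rellich's theorem. From the definition of the far field pattern,
\[
p^s(r\hat\xbold) = \frac{\mathrm{e}^{\imath k r}}{r^{(m-1)/2}} \left( p^\infty(\hat\xbold) + \mathrm{O}\!\left( r^{-1}\right) \right) \qquad \text{as } r \to \infty,
\]
uniformly in $\hat\xbold \in \mathbb{S}^{m-1}$. Setting $p^\infty \equiv 0$ gives $|p^s(r\hat\xbold)|^2 = \mathrm{O}\bigl(r^{-(m+1)}\bigr)$ uniformly in $\hat\xbold$, so
\[
\int_{|\xbold| = r} |p^s(\xbold)|^2 \, dS_{\xbold} \le C\, r^{m-1} \cdot r^{-(m+1)} = C\, r^{-2} \longrightarrow 0.
\]

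Next, fix $R_0$ large enough that $\overline{\Omega} \subset B_{R_0}$, so that $p^s$ solves the Helmholtz equation in $|\xbold| > R_0$. Rellich's theorem then yields $p^s \equiv 0$ on the unbounded component $\{|\xbold| > R_0\}$. To propagate this to all of $\Omega_e$, I invoke the interior elliptic regularity for Helmholtz: $p^s \in H^1_{\mathrm{loc}}(\Omega_e)$ is in fact real-analytic in $\Omega_e$. Since $\Omega_e$ is connected (an assumption stated in the setup of the paper), vanishing on the open set $\{|\xbold| > R_0\} \subset \Omega_e$ forces $p^s \equiv 0$ on the whole of $\Omega_e$.

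The main obstacle, such as it is, lies in Rellich's theorem itself, the delicate point being the termwise asymptotic analysis of Hankel functions and the justification that the series expansion of $p^s$ converges uniformly on compact subsets of $|\xbold| > R_0$; both are classical and available in the cited reference \cite{cakoni+colton}. Everything else is a direct bookkeeping argument, so the proof is essentially a citation plus the two short observations above.
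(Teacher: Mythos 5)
Your proof is correct and is essentially the same as the paper's, which simply invokes the classical Rellich lemma (citing [Th.\ 4.1] of \cite{cakoni+colton}) without spelling out the details: far field zero gives the $L^2$ decay on spheres, Rellich gives vanishing outside a large ball, and analyticity plus connectedness of $\Omega_e$ propagates this to all of $\Omega_e$. Your write-up just fills in the standard steps the paper leaves implicit.
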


Let us consider an incident plane wave $p^i=p^i(\cdot,\dbold)$  as in (\ref{incident}), 
and denote by $\ubold (\cdot ,\dbold )$, $p^s(\cdot , \dbold )$ and $p^{\infty}(\cdot , \dbold) $ the associated displacement field, scattered wave and far field pattern, respectively. Because of the presence of the solid, it is less obvious that the following \emph{reciprocity relation} 
holds:
\begin{equation}\label{eq:reciprocity}
p^{\infty} (-\hat{\xbold}, \dbold) = p^{\infty} (-\dbold,\hat{\xbold}) \qquad \mbox{a.e. }\hat{\xbold}, \dbold\in\mathbb{S}^{m-1},
\end{equation}
but this is indeed the case, cf. \cite[Lemma 2.2]{monk+selgas_2} and references therein.

The \emph{far field operator} $F: L^2 (\mathbb{S}^{m-1}) \to L^2 (\mathbb{S}^{m-1})$ is then defined by
\[
(Fg) (\hat{\xbold} ) := \int\limits_{\mathbb{S}^{m-1}} p^{\infty} (\hat{\xbold}, \dbold) \, g (\dbold) \, d S_{\dbold} \qquad
\text{a.e. }\hat{\xbold}\in\mathbb{S}^{m-1}.
\]
Notice that, by the linearity of the forward problem \eqref{eq:PM}, $p^{\infty}_g:=Fg$ is the far field pattern of the wave scattered by the incident field  $ p^i= p^i_g $, where
\begin{equation}\label{defHerglotzfunc}
p^i_g (\xbold) :=
\int\limits_{\mathbb{S}^{m-1}} \; {\mathrm{e}}^{\imath k \xbold \cdot \ybold} \, g (\ybold) \, dS_{\ybold} =  \displaystyle\int\limits_{\mathbb{S}^{m-1}} \; p^i( \xbold, \ybold) \, g (\ybold) \, dS_{\ybold} 
\end{equation}
is the \emph{Herglotz wave function} with kernel $g\in L^2 (\mathbb{S}^{m-1})$~\cite{col19}.
The far field operator $F:L^2 (\mathbb{S}^{m-1}) \to L^2 (\mathbb{S}^{m-1})$ is injective and has dense range if, and only if, $\omega $ is not an \emph{interior transmission eigenvalue} associated to the fluid-solid interaction problem and with an eigenfunction of the form of a Herglotz wave function (see  \cite[Lemmas 2.3, 2.4]{monk+selgas_1}).  In the next section, we shall
connect the injectivity of a modified far field operator to a class of nnNtD interior eigenvalues for the solid.

\section{Impedance type modification of the far field operator}\label{sec:ffoperator_modification1} 
From now on $\omega$ (and so $\Lambda$) is a fixed non-zero real parameter (the angular frequency of the field), and it is assumed not  to be a Jones frequency.
We now introduce a modified far field operator which makes use of the following auxiliary problem:
\begin{subnumcases}{\label{PM_Steklov}}
\displaystyle \Delta h + k^2 h = 0 &in $\Omega_e$,\\[.5ex]
\displaystyle \frac{\partial h}{\partial \nbold}+\alpha h=0&on $\Gamma$,\label{PM_Steklov_b}\\[.5ex]
\displaystyle h = h^i + h^s,\quad\text{with}\quad\frac{\partial h^s}{\partial r}  - \imath k h^s = \mathrm{o}\left( r^{-(m-1)/2}\right)&as $r\to\infty$,
\end{subnumcases}
where $\alpha\in \mathbb{C}$ with $\Im(\alpha)\geq 0$ is a fixed parameter.
Using Rellich's lemma and the Fredholm alternative, this auxiliary problem is well-posed, and its solution belongs to $\mathcal{C}^2(\Omega_e)\cap\mathcal{C}^1(\overline{\Omega_e})$ provided both $\Omega$ and $h^i$ are smooth enough; see the comments below problem (1.2) in \cite{cakoni+al2016}. Let us consider an incident plane wave $h^i=p^i(\cdot,\dbold)$  as in (\ref{incident}), and denote the corresponding scattered wave and its far field pattern by  $h^s(\cdot , \dbold ) $ and $h^{\infty}(\cdot , \dbold) $, respectively. We introduce the associated far field operator 
\[
(F_{\alpha}g) (\hat{\xbold} ) := \int\limits_{\mathbb{S}^{m-1}} h^{\infty} (\hat{\xbold}, \dbold) \, g (\dbold) \, dS_{\dbold} \qquad
\text{a.e. }\hat{\xbold}\in\mathbb{S}^{m-1}.
\]
We use it to define the impedance type \emph{modified far field operator} $\mathcal{F}_{\alpha}:=F-F_{\alpha}$, so that 
\begin{equation}\label{ffop_modified}
(\mathcal{F}_{{\alpha}}g) (\hat{\xbold} ) = \int\limits_{\mathbb{S}^{m-1}} \big( p^{\infty} (\hat{\xbold}, \dbold) - h^{\infty} (\hat{\xbold}, \dbold) \big) \, g (\dbold) \, dS_{\dbold} \qquad
\text{a.e. }\hat{\xbold}\in\mathbb{S}^{m-1}.
\end{equation}
\subsection{Rescaled impedance eigenvalues}\label{subsec:eigsSteklov}

In order to link the modified far field operator to certain rescaled impedance eigenvalues, let us recall that the usual interior transmission eigenvalues arise in the analysis of the injectivity of the far field operator. Accordingly, we study formally the injectivity of the modified far field operator 
$\mathcal{F}_{{\alpha}}: L^2 (\mathbb{S}^{m-1}) \to L^2 (\mathbb{S}^{m-1})$. To this end, we consider $g\in L^2 (\mathbb{S}^{m-1})$ such that $\mathcal{F}_{{\alpha}}g =0$ in $\mathbb{S}^{m-1}$, that is,
\begin{equation}\label{inj-modF_rewritten}
 \int\limits_{\mathbb{S}^{m-1}} p^{\infty} (\dbold, \ybold)  \, g (\ybold) \, dS_{\ybold}
=
 \int\limits_{\mathbb{S}^{m-1}} h^{\infty} (\dbold, \ybold) \, g (\ybold)  \, dS_{\ybold} \qquad
\text{a.e. }\dbold\in\mathbb{S}^{m-1} \, .
\end{equation}
We can rewrite this condition by considering 
the 
incident fields $p^i=p^i_g$ and $h^i=h^i_g$, where $p^i_g=h^i_g$ is the Herglotz wave function with density $g$ defined in (\ref{defHerglotzfunc}). Indeed, if $p^{\infty}_g$ and $h^{\infty}_g$ denote the  far field patterns of the associated scattered fields, then (\ref{inj-modF_rewritten}) implies that $p^{\infty}_g = h^{\infty}_g$ in $\mathbb{S}^{m-1}$.
By Rellich's lemma, this implies that the scattered fields $p^s_g$ and $h^s_g$ match in  $\overline{\Omega_e}$; hence, the boundary condition satisfied by $h_g$ in (\ref{PM_Steklov}) implies that 
\[
\frac{\partial p_g}{\partial \nbold}+\alpha p_g=0 \qquad \text{on } \Gamma  \, .
\]
Thanks to the transmission conditions of problems \eqref{eq:elasticwithp} and \eqref{eq:PM1}, the above is equivalently written in terms of the associated solid displacements $\ubold_g$ as
\[
\rho_f\Lambda \, \Pn (\ubold_g|_{\Gamma}) \, \nbold - \alpha \,\Tbold\ubold_g= 0 
\qquad \text{on } \Gamma,
\]
or as
\begin{equation}\label{eq:rescaled}
\NtD_{\Lambda}\Pn\Tbold\ubold_g =\frac{\alpha}{\rho_f\Lambda} \Pn\Tbold\ubold_g .
\end{equation}
Here again, we denote $\ip=\omega^2$. We note that for given $\omega\ne 0$ and $\alpha\in\mathbb{C}$, problem \eqref{eq:rescaled} has a non-trivial solution $\ubold_g\ne\zerobold$ if, and only if,
\[
\frac{\alpha}{\rho_f\Lambda}\in\Spec(\NtD_{\Lambda}).
\]
This discussion suggests the following definition.

\begin{definition}[Rescaled impedance eigenvalues]\label{defn:rie} 
For a given $\omega\ne 0$, we will call the elements of the multiset (with multiplicities)
\[
\tilde{\Spec}\left(\NtD_{\omega^2 }\right):=\left\{\frac{\alpha}{\rho_f\omega^2};\, \alpha\in\Spec\left(\NtD_{\omega^2}\right)\right\}
\]
the \emph{rescaled impedance eigenvalues} of $\Omega$.
\end{definition}
With this definition we can now formally state the target signature strategy related to impedance eigenvalues.

\noindent{\bf Impedance target signatures. }\label{defn:rie2} 
\emph{Given far field data $p^{\infty} (\hat{\xbold}, \dbold)$ for all $\hat{\xbold}, \dbold\in \mathbb{S}^{m-1}$, determine the rescaled impedance eigenvalues of $\Omega$.  This discrete set of eigenvalues is the proposed \emph{impedance target signatures}.}

\begin{remark} In practice we would not have data for all $(\hat{\xbold}, \dbold)$ but only noisy measurements for a finite number of pairs.  Thus we cannot hope to determine all the rescaled impedance eigenvalues.  Furthermore, the number that can be determined is limited by noise on the data. For more information see Section~\ref{sec:NumSteklovFS}.
\end{remark}
\subsection{Determination of impedance eigenvalues from far field data}\label{subsec:SteklovFromFFP}
We next study the determination of impedance eigenvalues from far field data by using the modified far field operator $\mathcal{F}_{{\alpha}}=F-F_{\alpha}$ defined in (\ref{ffop_modified}). More precisely, following \cite{cogar+al2017b} we propose to find (approximate) solutions $g_{\zbold}\in L^2(\mathbb{S}^{m-1})$ of the modified far field equation 
\begin{equation}\label{eq:ffeq}
\mathcal{F}_{{\alpha}} g_{\zbold}=\Phi^{\infty}_{\zbold} \quad \text{for points } \zbold \in \Omega .
\end{equation}
Here and in the sequel 
\[
\Phi^{\infty}_{\zbold} (\xbold)
= \begin{cases}
\displaystyle\frac{\mathrm{e}^{\mathrm{i}\pi/4} }{\sqrt{8\pi k } }  e^{-\mathrm{i}k \,\hat{\xbold}\cdot\zbold} \qquad & \text{if } m=2,\\[1ex]
\displaystyle\frac{1}{4 \pi } \mathrm{e}^{-\mathrm{i}k \,\hat{\xbold}\cdot\zbold } & \text{if } m=3,
\end{cases}
\]
is the far field pattern of a point source located at $\zbold$ in a purely fluid domain: 
\[
\Phi_{\zbold} (\xbold) = 
\begin{cases}
\displaystyle\frac{\mathrm{i}}{4}  H^{(1)}_0(k |\xbold-\zbold|) \qquad & \text{if }m=2,\\[1ex]
\displaystyle\frac{e^{\mathrm{i}k |\xbold-\zbold|}}{4 \pi |\xbold-\zbold| } & \text{if }m=3.
\end{cases}
\]
We expect that the norm of such approximate solutions will blow up whenever $\alpha$ is a rescaled impedance eigenvalue. Before justifying this approach, we note that to solve (approximately) these modified far field equations, we need $\mathcal{F}_{{\alpha}}: L^2(\mathbb{S}^{m-1})\to L^2(\mathbb{S}^{m-1})$ to be injective and to have dense range. 

\begin{lemma}\label{lem:F1to1}
The modified far field operator  $\mathcal{F}_{{\alpha}}: L^2(\mathbb{S}^{m-1}) \to L^2(\mathbb{S}^{m-1})$ in equation (\ref{ffop_modified}) is one-to-one and has dense range if,  and only if,  $\alpha$ is not a rescaled impedance eigenvalue with an eigenmode of the form 
\begin{equation}
\ubold_g=\displaystyle\int\limits_{\mathbb{S}^{m-1}} \ubold(\cdot,\ybold)g(\ybold)\, dS_{\ybold}\;\mbox{ for some }g\in L^2(\mathbb{S}^{m-1}).\label{eq:formofug}
\end{equation}
\end{lemma}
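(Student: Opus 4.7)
The plan is to decouple the two claims in the lemma---injectivity and dense range---and in each case use the Robin boundary condition \eqref{PM_Steklov_b} of the auxiliary problem, together with Rellich's lemma and the fluid-solid transmission conditions, to recover the rescaled impedance eigenvalue equation \eqref{eq:rescaled}. The derivation already carried out in the paragraphs preceding the lemma essentially does the work for the ``only if'' side of both statements; the content of the proof is really in the ``if'' directions.

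For \emph{injectivity}, I start from $\mathcal{F}_\alpha g=0$. By Rellich's lemma (Lemma \ref{cor_ffid}), $p^s_g = h^s_g$ throughout $\overline{\Omega_e}$; since the Herglotz incident waves agree, the total fields satisfy $p_g = h_g$ on $\Gamma$ as well. Transferring the Robin condition from $h_g$ to $p_g$ and combining with \eqref{eq:PMc}--\eqref{eq:PMd} reproduces \eqref{eq:rescaled} for the Herglotz-type displacement $\ubold_g$. The hypothesis of the lemma (no Herglotz-type eigenmode at $\alpha$) then forces $\ubold_g\equiv\zerobold$ in $\Omega$. Running the transmission conditions in reverse shows $p_g = 0$ and $\partial_\nbold p_g = 0$ on $\Gamma$; extending $p^s_g$ into $\Omega$ by $-p^i_g$ yields a function that is $C^1$ across $\Gamma$, solves Helmholtz in all of $\mathbb{R}^m$, and satisfies the Sommerfeld radiation condition. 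Such an entire radiating solution vanishes identically, so $p^i_g\equiv 0$ on $\Omega$, hence on $\mathbb{R}^m$ by analyticity, and $g=0$ by injectivity of the Herglotz operator.

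For \emph{dense range}, I use that in a Hilbert space this is equivalent to injectivity of the adjoint $\mathcal{F}_\alpha^\ast$. The reciprocity relation \eqref{eq:reciprocity} holds both for $p^\infty$ and for $h^\infty$, because its proof rests on Green's second identity together with the vanishing of $\int_\Gamma(u_1\,\partial_\nbold u_2 - u_2\,\partial_\nbold u_1)\,dS$ whenever $u_1,u_2$ share the same linear impedance boundary condition. A bookkeeping computation identifies $\mathcal{F}_\alpha^\ast$ with $R\,\overline{\mathcal{F}_{\bar\alpha}}\,R$, where $(Rg)(\dbold):=g(-\dbold)$, so injectivity of the adjoint reduces to injectivity of $\mathcal{F}_{\bar\alpha}$. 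Applying the first paragraph with $\bar\alpha$ in place of $\alpha$, and noting that taking complex conjugates of \eqref{eq:rescaled} interchanges Herglotz-type eigenmodes at $\alpha$ with those at $\bar\alpha$, gives the same condition as in the statement.

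The main obstacle is the adjoint/dense-range step, since for complex $\alpha$ the operator $\mathcal{F}_\alpha$ is not self-adjoint and the reciprocity must be invoked with care. It is reassuring, however, that $\Spec(\NtD_\Lambda)$ is real, so for non-real $\alpha$ the hypothesis of the lemma is automatic and both injectivity and dense range hold trivially; for real $\alpha$ the conjugation in $R\,\overline{\mathcal{F}_{\bar\alpha}}\,R$ is harmless, leaving only the reflection $R$, which visibly preserves kernels. The routine parts---justifying the ``entire plus Sommerfeld implies zero'' step, injectivity of the Herglotz transform, and the well-posedness of \eqref{eq:elasticwithp} under the standing non-Jones assumption---are all standard and cited earlier in the excerpt.
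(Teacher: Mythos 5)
Your argument is essentially the paper's: injectivity follows from Rellich's lemma and the transmission conditions, which reduce $\mathcal{F}_{\alpha}g=0$ to the eigenvalue equation \eqref{eq:rescaled} (and you usefully make explicit the step $\ubold_g=\zerobold\Rightarrow g=0$, which the paper leaves implicit), while dense range is handled exactly as in the paper by using reciprocity to show that $\mathcal{F}_{\alpha}^{*}$ is injective precisely when $\mathcal{F}_{\alpha}$ is. One small correction: the reciprocity computation yields $\mathcal{F}_{\alpha}^{*}=\mathcal{T}\mathcal{F}_{\alpha}\mathcal{T}$ with the \emph{same} $\alpha$, where $\mathcal{T}g=\overline{g(-\cdot)}$, rather than your $R\,\overline{\mathcal{F}_{\bar\alpha}}\,R$ (complex conjugation does not turn the outgoing solution for $\alpha$ into the one for $\bar\alpha$), but as you yourself observe this is immaterial because only real $\alpha$ can be a rescaled impedance eigenvalue.
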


\begin{proof}
Let us start by characterizing when $\mathcal{F}_{{\alpha}}$ is injective. To this end, 
for each $g\in L^2(\mathbb{S}^{m-1})$  
we consider in \eqref{eq:PM} and (\ref{PM_Steklov}) the incident waves given by the Herglotz {wave} function with kernel $g$, that is, $p^i=p^i_g$ and $h^i = h^i_g$. This allows us to rewrite the property $\mathcal{F}_{{\alpha}}g=0$ as $p_g^s=h_g^s$ in $\Omega_e$, in which case we may reason as in \S\ref{subsec:eigsSteklov} to deduce that the displacements field $\ubold_g$ solves \eqref{eq:rescaled}. Thus, $\mathcal{F}_{{\alpha}}$ is injective as long as $\alpha$ is not a rescaled impedance eigenvalue whose eigenmode is of the form given in (\ref{eq:formofug}).

Next, we study when $\mathcal{F}_{{\alpha}}$ has dense range in $L^2(\mathbb{S}^{m-1})$ or, equivalently, when its adjoint operator $\mathcal{F}_{{\alpha}}^*$ is one-to-one. Notice that, for any $g_1,g_2\in L^2(\mathbb{S}^{m-1})$ we have
\begin{align*}
\int\limits_{\mathbb{S}^{m-1}} \mathcal{F}_{{\alpha}}^*g_1(\xbold)\,\overline{g_2}(\xbold)\,dS_{\xbold}&=
\int\limits_{\mathbb{S}^{m-1}} g_1(\xbold)\, \overline{\mathcal{F}_{{\alpha}}g_2 (\xbold)} \,dS_{\xbold}
\\
&=\int\limits_{\mathbb{S}^{m-1}} g_1(\xbold)
\overline{\int\limits_{\mathbb{S}^{m-1}} (p^{\infty}(\xbold,\dbold)-h^{\infty}(\xbold,\dbold)) g_2(\dbold) \,dS_{\dbold} }
\, \,dS_{\xbold} ;
\end{align*}
then, by changing the order of integration and applying the reciprocity relation (\ref{eq:reciprocity}),
\begin{align*}
\int\limits_{\mathbb{S}^{m-1}} \mathcal{F}_{{\alpha}}^*g_1(\xbold)\,\overline{g_2}(\xbold)\,dS_{\xbold}
&=\int\limits_{\mathbb{S}^{m-1}} \overline{g_2(\dbold)}
\int\limits_{\mathbb{S}^{m-1}} \overline{(p^{\infty}(-\dbold,-\xbold)-h^{\infty}(-\dbold,-\xbold))} g_1(\xbold) \,dS_{\xbold} 
\, \,dS_{\dbold} 
\\
&=\int\limits_{\mathbb{S}^{m-1}} \overline{g_2(-\dbold)}
\overline{\int\limits_{\mathbb{S}^{m-1}} (p^{\infty}(\dbold,\xbold)-h^{\infty}(\dbold,\xbold)) \, \overline{g_1(-\xbold)} \,dS_{\xbold} }
\, \,dS_{\dbold}\\
&=\int\limits_{\mathbb{S}^{m-1}} {\tilde{g}_2(\dbold)} \, \overline{\mathcal{F}_{{\alpha}}\tilde{g}_1(\dbold)} \,dS_{\dbold} ,
\end{align*}
where we denote $\tilde{g}_j(\xbold)=\overline{g_j(-\xbold)}$ in $\mathbb{S}^{m-1}$ for $j=1,2$. Thus, $\mathcal{F}_{{\alpha}}^*$ is one-to-one if and only if $\mathcal{F}_{{\alpha}}$ is one-to-one. This means that 
 $\mathcal{F}_{{\alpha}}: L^2(\mathbb{S}^{m-1})\to L^2(\mathbb{S}^{m-1})$ has dense range if, and only if, it is one-to-one. This completes the proof.
 \end{proof}

Now we study the behaviour of (approximate) solutions $g_{\zbold}\in L^2(\mathbb{S}^{m-1})$ of the modified far field equation $\mathcal{F}_{{\alpha}} g_{\zbold}=\Phi^{\infty}_{\zbold}$ for points $\zbold$ inside the target $\Omega$, depending on the parameter $\alpha\in\mathbb{R}$.  In general our approach follows \cite{cakoni+al2016}.

\subsection{Behaviour when $\alpha$ is not a rescaled impedance eigenvalue}\label{subsubsec:notSteklov}
Let us consider any point $\zbold\in\Omega $ and try to build an approximate solution  $g\in L^2(\mathbb{S}^{m-1})$ of the modified far field equation
\[
\mathcal{F}_{{\alpha}}g 
\, = \, \Phi^{\infty}_{\zbold}\quad \text{a.e. in } \mathbb{S}^{m-1} .
\]
Due to to having $\zbold\in\Omega $, we can apply Rellich's lemma to deduce that this equation is fulfilled if, and only if,
\[
 p^{s}_g-h^{s}_g \, = \, \Phi_{\zbold} \quad \text{a.e. in } \Omega_e .
\]
Then, using the boundary condition for the field $h_g$ in \eqref{PM_Steklov},
\[
\displaystyle\frac{\partial p_g}{\partial \nbold}+\alpha p_g=
\displaystyle\frac{\partial \Phi_{\zbold}}{\partial \nbold}+\alpha  \Phi_{\zbold} \qquad \mbox{on } \Gamma .
\]
Equivalently,  in terms of the displacements field $\ubold_g$, we have, cf. \eqref{eq:elasticwithp}, \eqref{eq:PM1}, and \eqref{eq:rescaled},
\begin{equation}\label{eq:nonhomSt}
\NtD(f)_{\Lambda}-\frac{\alpha}{\rho_f\Lambda}f=
\frac{1}{\rho_f\Lambda}\left(\frac{\partial \Phi_{\zbold}}{\partial \nbold}+\alpha  \Phi_{\zbold}\right) \qquad \text{on } \Gamma ,
\end{equation}
where $f:=\Pn\Tbold \ubold$ and $\Lambda=\omega^2$. 

Assuming that $\alpha$ is not a rescaled impedance eigenvalue, the problem above is well-posed and has the unique solution 
\[
f=\frac{1}{\rho_f\Lambda}\left(\NtD_{\Lambda}-\frac{\alpha}{\rho_f\Lambda}\right)^{-1}\left(\frac{\partial \Phi_{\zbold}}{\partial \nbold}+\alpha  \Phi_{\zbold}\right)\in H^{-1/2}(\Gamma),
\] 
and we can therefore recover a unique  $\ubold\in \Hone$ by solving \eqref{eq:elasticmixednonhom} for $\Lambda=\omega^2$; however, this unique solution is not  necessarily of the form $\ubold=\ubold_g$ as in \eqref{eq:formofug}.
 
In the fluid domain, we are looking for $p_g\in H^1_{\mathrm{loc}}(\Omega_e)$ such that
\begin{equation}\label{MP_fluid_z}
\begin{cases}
\Delta p_g + k^2 p_g= 0 & \qquad \text{in }\Omega_e,\\[.5ex]
p _g = -\Tbold\ubold \cdot\nbold& \qquad \text{on } \Gamma,\\[.5ex]
\displaystyle \frac{\partial p_g}{\partial \nbold}  = \rho_f\Lambda \ubold \cdot \nbold  & \qquad \text{on } \Gamma,\\[.5ex]
\displaystyle p_g = p_g^i + p_g^s,\quad \text{ with }\frac{\partial p_g^s}{\partial r}  - \imath k p_g^s = \mathrm{o}\left( r^{-(m-1)/2}\right) &\qquad\text{ as }r\to\infty.
\end{cases}
\end{equation}
 At first glance, this problem seems to have too many constraints. However, this is not the case because $p^i_g$ is not given above. In consequence, we search for  $p^i$ not in the form of a Herglotz function $p^i=p^i_g$  
with density $g\in L^2(\mathbb{S}^{m-1})$ but in the larger space of solutions of Helmholtz equation in $\Omega$:
\[
\mathbb{H}_{\mathrm{inc}}(\Omega ) = \{ q\in H^1(\Omega); \, \Delta q+k^2 q= 0 \mbox{ in }\Omega \} .
\]
In other words, we look for $p^i \in H^1(\Omega)$ and $p^s\in H^1_{\mathrm{loc}}(\Omega_e)$ such that
\begin{equation}\label{MP_fluid_z_aux}
\begin{cases}
\Delta p^i + k^2 p^i= 0 & \qquad \text{in }\Omega,\\[.5ex]
\Delta p^s + k^2 p^s= 0 & \qquad \text{in }\Omega_e,\\[.5ex]
 p^i+p^s = -\Tbold\ubold \cdot\nbold & \qquad \text{on } \Gamma,\\[.5ex]
\displaystyle\frac{\partial p^i}{\partial \nbold}+ \displaystyle\frac{\partial p^s}{\partial \nbold} = \rho_f\Lambda \ubold \cdot \nbold & \qquad \text{on } \Gamma,\\[1.25ex]
\displaystyle\frac{\partial p^s}{\partial r}  - \imath k p^s = \mathrm{o}\left( r^{-(m-1)/2}\right) & \qquad\text{ as }r\to\infty. 
\end{cases}
\end{equation}
This transmission problem is well-posed, although here again its unique solution is not necessarily of the form 
$p^i=p^i_g$ and $p^s=p^s_g$ with $g\in L^2(\mathbb{S}^{m-1})$.

Summing up, we have seen that, when $\alpha\in\mathbb{R}$ is not a rescaled impedance eigenvalue, for any $\zbold\in\Omega$ there exists some incident field $p^i\in\mathbb{H}_{\mathrm{inc}}(\Omega)$ for which $p^{\infty}$ and $h^{\infty}$ (the far field patterns  of the scattered fields $p^s$ and $h^s$ that solve \eqref{eq:PM} and (\ref{PM_Steklov}), respectively) satisfy
$$
p^{\infty}-h^{\infty}=\Phi_{\zbold}^{\infty} \, .
$$ 
This gives us an approximate solution of the modified far-field equation $\mathcal{F}_{{\alpha}}g=\Phi^{\infty}_{\zbold}$ in $\mathbb{S}^{m-1}$ by approximating the incident field $p^i\in\mathbb{H}_{\mathrm{inc}}(\Omega)$ with a Herglotz wave function $p^i_g$. 

More precisely, in the usual way we may 
factorise  $\mathcal{F}_{{\alpha}}$ in terms of the following well defined and bounded operators:
\begin{itemize}
\item $\mathcal{H}: L^2(\mathbb{S}^{{m-1}})\to\mathbb{H}_{\mathrm{inc}}(\Omega)$ maps each function $g$ into the associated Herglotz wave function $\mathcal{H}g=p^i_g$;
\item $\mathcal{G}: \mathbb{H}_{\mathrm{inc}}(\Omega)\to L^2(\mathbb{S}^{{m-1}})$ maps any incident wave into  the far field pattern $\mathcal{G}p^i=p^{\infty}$ of the scattered wave $p^s$ that solves the fluid-solid interaction problem \eqref{eq:PM} for the incident field $p^i$;
\item $\mathcal{G}_{\alpha}: \mathbb{H}_{\mathrm{inc}}(\Omega)\to L^2(\mathbb{S}^{{m-1}})$ maps each incident field $h^i$ into the far field pattern of $h^s$ the solution of the auxiliary problem (\ref{PM_Steklov}).
\end{itemize}
We have shown that 
 $\Phi^{\infty}_{\zbold}$ is in the range of $(\mathcal{G}-\mathcal{G}_{\alpha})$. Recall that the range of $\mathcal{G}-\mathcal{G}_{\alpha}$ is dense in that of $\mathcal{F}_{{\alpha}}$, indeed we have $\mathcal{F}_{{\alpha}}=(\mathcal{G}-\mathcal{G}_{\alpha})\circ\mathcal{H} :  L^2(\mathbb{S}^{{n-1}})\to L^2(\mathbb{S}{^{m-1}})$.  We summarise these results in Theorem~\ref{find_stek} at the end of the next section.

\subsection{Behaviour when $\alpha$ is a rescaled impedance eigenvalue}\label{subsubsec:yesSteklov}
By Definition \ref{defn:rie}, cf. also discussion after \eqref{eq:rescaled}, $\alpha\in\mathbb{R}$  being a rescaled impedance eigenvalue means that there exists  
$\ubold_{\alpha}\in \Hone$,  $ \ubold_{\alpha}\neq \zerobold$, such that
\[
\begin{cases}
\Lbold\ubold_{\alpha} + \rho_s \Lambda \ubold_{\alpha} = \zerobold & \qquad \text{in } \Omega,\\[.5ex]
\Pbold_\tang \ubold_\alpha=\zerobold& \qquad  \text{on } \Gamma,\\[.5ex]
\Pn\ubold_{\alpha}=\displaystyle\frac{\alpha}{\rho_f\Lambda} \Pn\Tbold\ubold_{\alpha}& \qquad  \text{on } \Gamma,
\end{cases}
\]
where $\Lambda=\omega^2\neq 0$ as usual. Then, we can  reason as in the previous case and show that there exist $p^i_{\alpha}\in\mathbb{H}_{\mathrm{inc}}(\Omega)$ and $p_{\alpha}\in H^1_{\mathrm{loc}}(\Omega_e)$ that satisfy \eqref{MP_fluid_z_aux} (see comments below \eqref{MP_fluid_z}), with $\ubold_\alpha$ substituted for $\ubold$.
 We note that we can build {$p_\alpha$ and $p_\alpha^i$} by solving the following transmission problem for {$\tilde{p}_\alpha$}, where the superscripts $\pm$ indicate whether  the trace or the normal derivative is taken either from the interior of $\Omega$ or from its exterior $\Omega_e$:
\begin{equation}\label{MP_fluid_alpha_aux}
\begin{cases}
\Delta \tilde{p}_{\alpha} + k^2 \tilde{p}_{\alpha}= 0 & \qquad \text{in } \Omega\cup\Omega_e , \\[.5ex]
 \tilde{p}_{\alpha}^+ - \tilde{p}_{\alpha}^- = -\Tbold\ubold_{\alpha} \cdot\nbold& \qquad \text{on }\Gamma ,\\[.5ex]
\displaystyle\frac{\partial \tilde{p}_{\alpha}^+}{\partial \nbold} - \frac{\partial \tilde{p}_{\alpha}^-}{\partial \nbold} = \rho_f\Lambda \ubold_{\alpha} \cdot \nbold  & \qquad \text{on } \Gamma, \\[1.25ex]
\displaystyle\frac{\partial \tilde{p}_{\alpha}}{\partial r}  - \imath k \tilde{p}_{\alpha} = \mathrm{o}\left( r^{-(m-1)/2}\right)& \qquad\text{ as }r\to\infty. 
\end{cases}
\end{equation}
Indeed, we then take  $p^i_{\alpha}=-\tilde{p}_{\alpha}|_{\Omega}$ and $p^s_{\alpha}=\tilde{p}_{\alpha}|_{\Omega_e}$, see (\ref{MP_fluid_z_aux}).

Let us consider any $\zbold\in\Omega$ for which there exists $p^i\in\mathbb{H}_{\mathrm{inc}}(\Omega)$ such that  $(\mathcal{G}-\mathcal{G}_{\alpha})p^i=\Phi^{\infty}_{\zbold}$.  Denoting by  $(\ubold, p) \in \Hone \times H^1_{\mathrm{loc}}(\Omega_e)$  the associated solution of the fluid-solid interaction problem \eqref{eq:PM} and reasoning as for the previous case,  we arrive at equation \eqref{eq:nonhomSt} with $f=\Pn\Tbold \ubold$.  Since $\alpha$ is now a rescaled impedance eigenvalue, this equation, by Fredholm's alternative for the operator $\NtD_{\Lambda}$, is only solvable if   
\[
\Scal{\frac{\partial \Phi_{\zbold}}{\partial\nbold}+\alpha\Phi_{\zbold}|_\Gamma, \ubold_\alpha\cdot \nbold}{L^2(\Gamma)}=0=\int\limits_{\Gamma} \left(  \frac{\partial \Phi_{\zbold}}{\partial\nbold}+\alpha\Phi_{\zbold}\right) \, (\overline{\ubold_{\alpha}}\cdot\nbold) \, dS_{\xbold}.
\]
Using the transmission conditions in \eqref{MP_fluid_alpha_aux}, we can rewrite it in terms of $\tilde{p}_{\alpha}$ as
\begin{equation}\label{hrep}
\int\limits_{\Gamma} 
\left( \frac{\partial \Phi_{\zbold}}{\partial\nbold}   \left[\overline{\tilde{p}_{\alpha}}\right]_{\Gamma}
-
\Phi_{\zbold} \left[ \frac{\partial \overline{\tilde{p}_{\alpha}}}{\partial\nbold}\right]_{\Gamma}
 \right) \, dS_{\xbold}  = 0 \, ,
\end{equation} 
where $\left[\cdot\right]$ is used to denote  the jump across $\Gamma$, and this is the integral representation of the Helmholtz equation   in $\Omega\cup\Omega^e$ {satisfied} by $\tilde{p}_{\alpha}$, up to complex conjugation and a scaling factor. 

Let us suppose that (\ref{hrep}) holds for a.e. $\zbold\in B$, where $B$ is any subset of $\Omega$ with non-zero measure. Then, $\tilde{p}_{\alpha}=0$ in $B$ and, by Rellich's theorem, $\tilde{p}_{\alpha}$ vanishes in the whole domain $\Omega$; that is, $p^i_{\alpha}=0$ in $\Omega$. Therefore, $\ubold_{\alpha}\in \Hbold^1(\Omega)$ and $p_{\alpha}=p^{s}_{\alpha}\in H^1_{\mathrm{loc}}(\Omega_e)$ solve the forward fluid-solid interaction problem \eqref{eq:PM} with null incident field. Assuming that $\omega$ is not a Jones frequency, 
it follows that $\ubold_{\alpha}=\zerobold$ in $\Omega$, which contradicts that it is a rescaled impedance mode. 

We summarise our results in the following theorem.

\begin{theorem}\label{find_stek}
 When $\alpha\in\mathbb{R}$ is not a rescaled impedance eigenvalue, $\Phi^{\infty}_{\zbold}$ is in the range of $(\mathcal{G}-\mathcal{G}_{\alpha})$ for a.e. $\zbold\in\Omega$. In contrast, if $\alpha\in\mathbb{R}\setminus\{0\}$ is a rescaled impedance eigenvalue and $B\subset\Omega$ has non-zero measure, then it is not possible that $\Phi^{\infty}_{\zbold}$ is in the range of $(\mathcal{G}-\mathcal{G}_{\alpha})$ for a.e. $\zbold\in B$.
\end{theorem}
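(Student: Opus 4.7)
My plan is to handle the two parts of Theorem \ref{find_stek} separately: the first constructively, by reducing the far-field equation to a well-posed boundary equation involving $\NtD_\Lambda$; the second by contradiction, using the Fredholm alternative for $\NtD_\Lambda$ together with Rellich's lemma and the standing assumption that $\omega$ is not a Jones frequency. Both arguments rely on the factorisation $\mathcal{F}_{\alpha} = (\mathcal{G}-\mathcal{G}_{\alpha})\circ\mathcal{H}$ and on the reduction of the coupled problem to the boundary equation \eqref{eq:nonhomSt}.

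For the first statement, fix $\zbold\in\Omega$ and aim to produce some $p^i\in\mathbb{H}_{\mathrm{inc}}(\Omega)$ whose associated far-fields satisfy $p^\infty - h^\infty = \Phi^\infty_{\zbold}$. By Rellich's Lemma \ref{cor_ffid}, this far-field identity on $\mathbb{S}^{m-1}$ is equivalent to $p^s-h^s = \Phi_{\zbold}$ in $\Omega_e$; combining with the impedance boundary condition \eqref{PM_Steklov_b} for $h$ and with the transmission conditions \eqref{eq:PMc}--\eqref{eq:PMd} then converts the identity to the non-homogeneous boundary equation \eqref{eq:nonhomSt} with unknown $f = \Pn\Tbold\ubold\in H^{-1/2}(\Gamma)$. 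Since $\alpha$ is not a rescaled impedance eigenvalue, the operator $\NtD_\Lambda - \alpha/(\rho_f\Lambda)$ is invertible, so $f$ is uniquely determined; the displacement $\ubold\in\Hone$ is then obtained by solving the mixed problem \eqref{eq:elasticmixednonhom}, and the required pair $(p^i,p^s)$ arises as the unique solution of the (well-posed) transmission problem \eqref{MP_fluid_z_aux}. By construction, $\Phi^\infty_{\zbold}$ lies in the range of $\mathcal{G}-\mathcal{G}_{\alpha}$.

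For the second statement, let $\alpha\in\mathbb{R}\setminus\{0\}$ be a rescaled impedance eigenvalue with eigenmode $\ubold_\alpha$, and suppose for contradiction that $\Phi^\infty_{\zbold}$ lies in the range of $\mathcal{G}-\mathcal{G}_\alpha$ for a.e.\ $\zbold$ in some set $B\subset\Omega$ of positive measure. The same reduction yields \eqref{eq:nonhomSt} for each such $\zbold$; since $\NtD_\Lambda - \alpha/(\rho_f\Lambda)$ now has the non-trivial kernel generated by $\Pn\Tbold\ubold_\alpha$, the Fredholm alternative imposes the solvability condition
\[
\int_\Gamma \left(\frac{\partial \Phi_{\zbold}}{\partial \nbold}+\alpha\,\Phi_{\zbold}\right)\bigl(\overline{\ubold_\alpha}\cdot\nbold\bigr)\,dS_{\xbold} = 0.
\]
Using the jump relations in \eqref{MP_fluid_alpha_aux} to replace $\ubold_\alpha\cdot\nbold$ and $\Tbold\ubold_\alpha\cdot\nbold$ by jumps of $\tilde{p}_\alpha$ across $\Gamma$, this rewrites as \eqref{hrep}, which (up to complex conjugation and a harmless constant) is the Green's boundary integral representation of $\tilde{p}_\alpha$ at $\zbold$. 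Hence $\tilde{p}_\alpha$ vanishes a.e.\ on $B$; since $\tilde{p}_\alpha$ solves the Helmholtz equation in $\Omega$, unique continuation forces $\tilde{p}_\alpha\equiv 0$ in $\Omega$, so $p^i_\alpha = 0$. The pair $(\ubold_\alpha,\tilde{p}_\alpha|_{\Omega_e})$ then solves the forward fluid--solid problem \eqref{eq:PM} with zero incident field, and the no-Jones-frequency assumption forces $\ubold_\alpha = \zerobold$, contradicting that it is an eigenmode.

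The main obstacle is the transition from \eqref{eq:nonhomSt} to \eqref{hrep}: one must carefully match signs, conjugations, and the fact that the normal-normal boundary data of $\ubold_\alpha$ on $\Gamma$ coincide, via \eqref{MP_fluid_alpha_aux}, with the jump data of $\tilde{p}_\alpha$, so that the Fredholm solvability condition is recognised as the Green's representation formula. The remaining steps (well-posedness of \eqref{eq:elasticmixednonhom}, \eqref{MP_fluid_z_aux} and \eqref{MP_fluid_alpha_aux}, unique continuation for Helmholtz, and uniqueness for \eqref{eq:PM} under the no-Jones-frequency hypothesis) are standard and already cited.
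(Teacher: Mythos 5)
Your proposal is correct and follows essentially the same route as the paper's own argument in \S\ref{subsubsec:notSteklov}--\S\ref{subsubsec:yesSteklov}: Rellich's lemma to pass from the far-field identity to $p^s-h^s=\Phi_{\zbold}$, reduction to the boundary equation \eqref{eq:nonhomSt}, invertibility of $\NtD_\Lambda-\alpha/(\rho_f\Lambda)$ versus the Fredholm solvability condition, recognition of that condition as the Green's representation \eqref{hrep} of $\tilde{p}_\alpha$, and finally unique continuation plus the no-Jones-frequency assumption. The only point worth making explicit is that identifying the cokernel element $\Pn\Tbold\ubold_\alpha$ with a multiple of $\ubold_\alpha\cdot\nbold$ uses $\alpha\neq 0$, which is exactly where that hypothesis enters the second statement.
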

\begin{remark} If $\zbold\not\in\Omega$, then  $\Phi^{\infty}_{\zbold}$ cannot be in the range of $(\mathcal{G}-\mathcal{G}_{\alpha})$, and this case  provides no useful
information about the impedance eigenvalues. 
\end{remark}
It will be convenient in  the examples in the next section to work not with the parameter $\alpha$ but with the rescaled parameter
\begin{equation}\label{eq:tildealpha}
\tilde{\alpha}:=\frac{\rho_f  \omega^2}{\alpha},
\end{equation}
and the rescaled modified far field operator 
\begin{equation}\label{eq:tildeFdefn}
\tilde{\mathcal{F}}_{\tilde{\alpha}}:=\mathcal{F}_{\rho_f \omega^2/\tilde{\alpha}}.
\end{equation}
In terms of the new parameter, Theorem \ref{find_stek} can be restated as:
\begin{corollary}
$\Phi^{\infty}_{\zbold}$ is in the range of $(\mathcal{G}-\mathcal{G}_{\alpha})$ for a.e. $\zbold\in\Omega$ if, and only if, $\tilde{\alpha}\not\in\Spec\left(\DtN_{\omega^2}\right)$.
\end{corollary}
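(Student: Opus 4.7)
The plan is to obtain the corollary as a direct translation of Theorem \ref{find_stek} into the $\tilde{\alpha}$ variable, using the fact that the spectra of $\NtD_{\omega^2}$ and $\DtN_{\omega^2}$ are related by reciprocation.

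First, I would recall from Definition \ref{defn:rie} that $\alpha\in\mathbb{R}\setminus\{0\}$ is a rescaled impedance eigenvalue if and only if $\alpha/(\rho_f\omega^2)$ belongs to $\Spec(\NtD_{\omega^2})$. The second ingredient is the observation made just before Lemma \ref{lem:FriedNtD}: whenever both maps are defined they are mutual inverses on the appropriate subspaces, so their nonzero eigenvalues correspond bijectively via $\beta\mapsto\beta^{-1}$. Applying this with $\beta=\alpha/(\rho_f\omega^2)$ and using $\beta^{-1}=\rho_f\omega^2/\alpha=\tilde{\alpha}$ from \eqref{eq:tildealpha}, I obtain the key equivalence that $\alpha$ is a rescaled impedance eigenvalue if and only if $\tilde{\alpha}\in\Spec(\DtN_{\omega^2})$.

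The corollary then follows by combining this equivalence with the two halves of Theorem \ref{find_stek}. If $\tilde{\alpha}\not\in\Spec(\DtN_{\omega^2})$, then $\alpha$ is not a rescaled impedance eigenvalue, and the first assertion of the theorem gives that $\Phi^{\infty}_{\zbold}$ lies in the range of $(\mathcal{G}-\mathcal{G}_{\alpha})$ for a.e.\ $\zbold\in\Omega$. Conversely, if $\tilde{\alpha}\in\Spec(\DtN_{\omega^2})$, then $\alpha\neq 0$ is a rescaled impedance eigenvalue, and applying the second assertion with the choice $B=\Omega$ precludes $\Phi^{\infty}_{\zbold}$ from being in that range for a.e.\ $\zbold\in\Omega$.

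There is no serious obstacle here, as both ingredients are already in hand and the argument is mostly bookkeeping. The only minor points requiring care are the degenerate value $\alpha=0$ (for which $\tilde{\alpha}$ would be undefined), already excluded by the very form of \eqref{eq:tildealpha}, and the standing hypothesis that $\omega$ is not a Jones frequency, which is precisely what allows the $\NtD$--$\DtN$ inversion to be invoked in this clean form.
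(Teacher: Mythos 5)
Your proof is correct and is essentially the argument the paper intends: the corollary is presented without proof as a mere restatement of Theorem \ref{find_stek} under the substitution $\tilde{\alpha}=\rho_f\omega^2/\alpha$, justified exactly by the reciprocal relation between $\Spec(\NtD_{\omega^2})$ and $\Spec(\DtN_{\omega^2})$ and the definition of rescaled impedance eigenvalues, which is what you carry out. Your side remarks on the excluded value $\alpha=0$ and the choice $B=\Omega$ are the right bookkeeping points.
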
%

\section{Numerical examples and experiments}\label{sec:NumSteklovFS}

{In this section we will be working on the unit disk to use a separation of variables. For illustrative purposes, we will consider two sets of physical and mathematical parameters summarised by
\[
\mathcal{P}:=(\lambda,\mu,\rho_s,\rho_f, c_f, {\omega}),
\] 
namely $\mathcal{P}_{5}=(1,0.5,1,1,1,5)$ and  $\mathcal{P}_{0.5}=(1,0.5,1,1,1,0.5),$
which only differ in the value of $\omega$.

{The explicit formulae used for the computation of impedance eigenvalues are collected in \S\ref{app:A},  and various expressions involving far field operators are in \S\ref{app:B}. For practical applications and in all the plots, the infinite sums in \S\ref{app:B} are truncated to summations over $|q|\le 20$.}

\subsection{Impedance eigenvalues for the disk, and the modified far field equations} 

In Supplementary Materials \S\ref{app:A} we show how to compute the nnDtN eigenvalues for the unit disk.  For the two experiments mentioned above, the results are shown in Table~\ref{tab:tab1} in \S\ref{app:A} where we give the eigenvalues $\alpha_q$ of the nnDtN map $\DtN_\Lambda$ less than $13$ for both sets of parameters above. These values are used to compare with predictions from the far field operators.

In Theorem~\ref{find_stek} we showed that the range of the modified solution operator ${\cal G}-{\cal G}_\alpha$ can be used to characterise the rescaled impedance eigenvalues of the solid.  Following the usual Linear Sampling approach for transmission eigenvalues~\cite{cakoni+al2016}, we use approximate solutions of the far field equation \eqref{eq:ffeq} as an indicator
for the presence of eigenvalues. In particular we consider approximate solutions $g_{\zbold}$ of \eqref{eq:ffeq}, or more precisely of its rescaled version
\[
\tilde{\mathcal{F}}_{\tilde{\alpha}} g_{\zbold}=\Phi^{\infty}_{\zbold},
\]
see the definition of $\tilde{\mathcal{F}}_{\tilde{\alpha}}$ in \eqref{eq:tildeFdefn}.
Of course this problem is ill-posed  but by truncating the series representation for the far field pattern, we obtain a regularised problem which is well posed, and it is this truncated problem what we use in the examples.  We expect $\|g_{\zbold}\|$ to peak at values of $\alpha$ corresponding to eigenvalues for almost every $\zbold$. 
We denote by  
\[
G(\zbold):=\|g_{\zbold}\|^2
\]
the $L^2$ norm of $g_{\zbold}$, see \eqref{eq:gznorm1} for its explicit expression.  We can take
\[
\zbold=-\zeta,\qquad\zeta=|\zbold|\in[0,1] 
\]
without loss of generality (this can be always achieved by a change of coordinates). To visualise the behaviour of $G(\zbold)$ over $\zbold\in\overline{D}$, we plot in Figure \ref{fig:Gnorms} the following two quantities  as functions of $\tilde{\alpha}$:
\[
\|G\|_{L^\infty(D)}=\sup_{\zeta\in[0,1]} G(\zbold) \qquad\mbox{and}\qquad  \|G\|_{L^1(D)}=2\pi\int_0^1 | G(\zbold)|\,\mathrm{d}\zeta.
\] 
As already mentioned, in both cases the infinite summation in \eqref{eq:gznorm1} is replaced by the summation over $|q|\le 20$; the first quantity is evaluated by taking $N=101$ equally 
distributed points $\zeta$ in $[0,1]$ and then randomly perturbing each interior point by a random shift independent and identically distributed in $\left[-\frac{1}{2N}, \frac{1}{2N}\right]$, whereas the second is evaluated using the standard relation  \eqref{eq:besselint}. 

\begin{figure}[hbt!]
\centering
\includegraphics[width=0.45\textwidth]{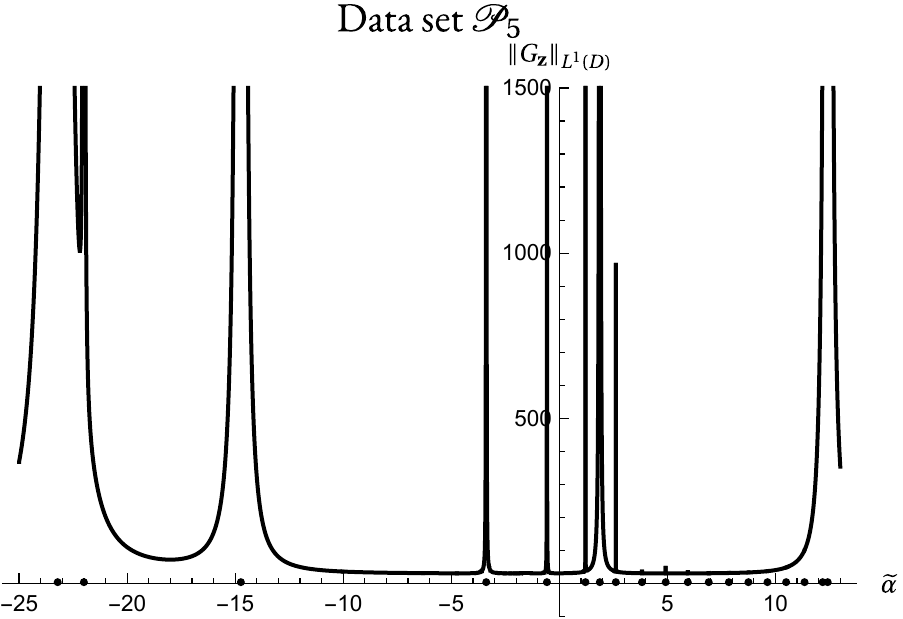}\hfill
\includegraphics[width=0.45\textwidth]{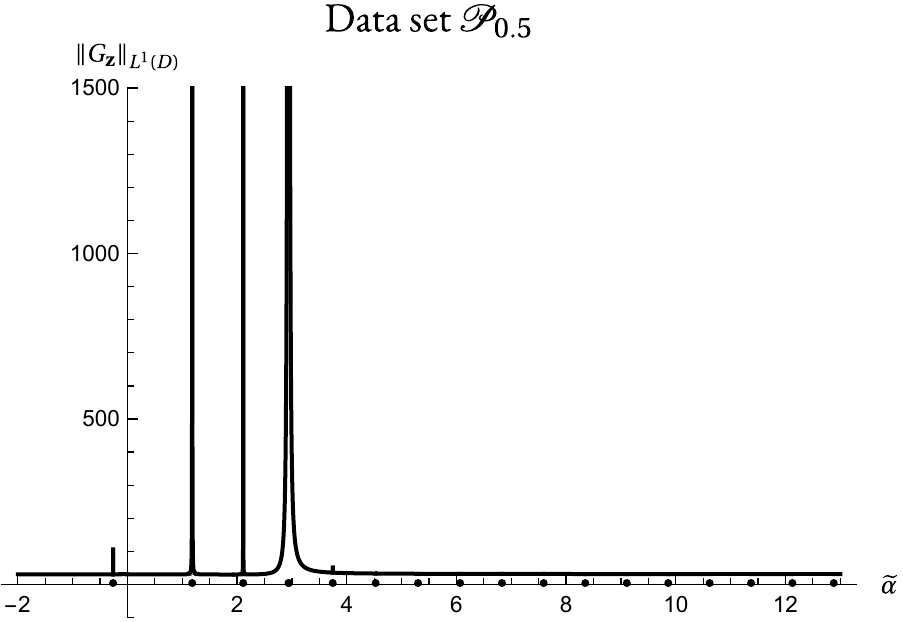}\\
\ \\\ \\
\includegraphics[width=0.45\textwidth]{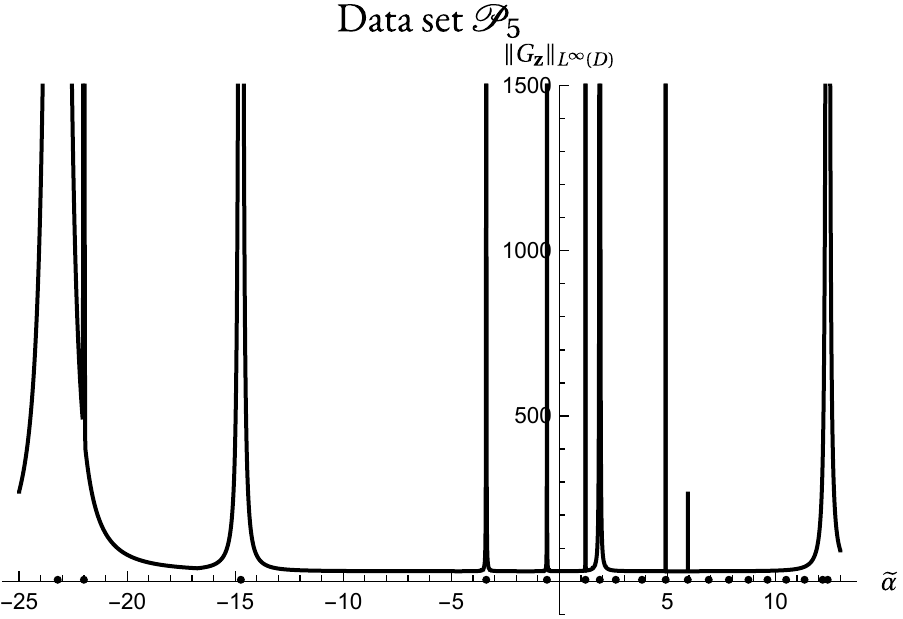}\hfill
\includegraphics[width=0.45\textwidth]{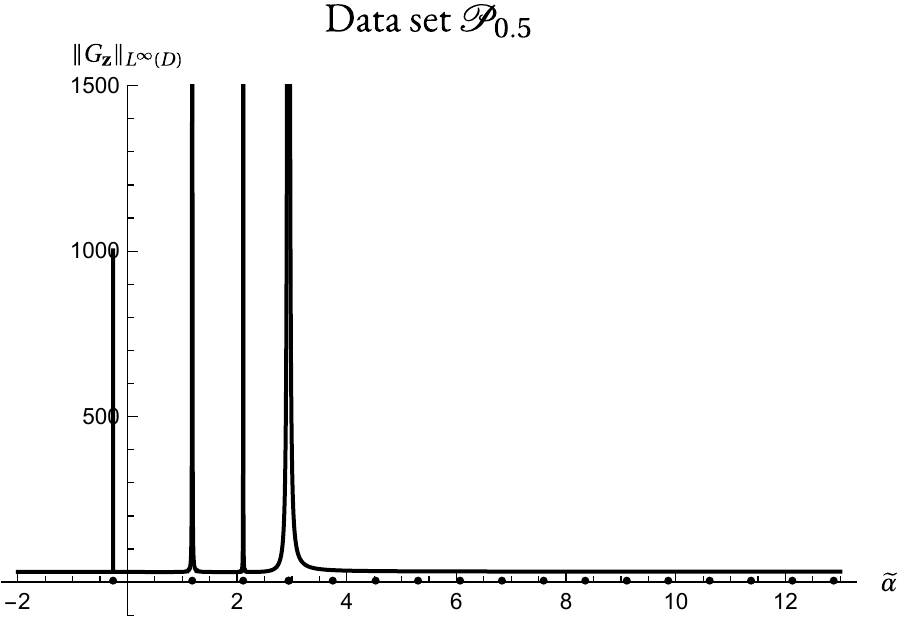}
\caption{Plots of the norms $\|G_{\zbold}\|_{L^\infty(D)}$ and  $\|G_{\zbold}\|_{L^1(D)}$ as functions of $\tilde{\alpha}$. In this and subsequent Figures, the black dots on the $\tilde{\alpha}$ axis indicate the positions of the eigenvalues of $\DtN_\Lambda$ from the corresponding column of Table \ref{tab:tab1}. \label{fig:Gnorms}} 
\end{figure}

We remark that in both cases and for both data sets the results are far from satisfactory, although better for $k=5$, and within each data set better for $L^\infty$ norm than for $L^1$ norm --- but in all the cases we do not observe peaks at quite a significant number of the nnDtN eigenvalues.

\subsection{Modified-modified far field equations} 
Instead of working with the particular modified far field equation \eqref{eq:ffeq} (or its rescaled  version involving 
the operator $\tilde{\mathcal{F}}_{\tilde{\alpha}}$) we propose working with a  \emph{modified-modified far field equation}: 
\begin{equation}\label{eq:mmffe}
\tilde{\mathcal{F}}_{\tilde{\alpha}} g_{\zbold}=\tilde{F}_{\tilde{\alpha}}\Phi^{\infty}_{\zbold} \quad \text{for } \zbold \in \overline{D},
\end{equation} 
which differs from \eqref{eq:ffeq} by an additional application of $\tilde{F}_{\tilde{\alpha}}$ in the right-hand side.

\begin{remark} Instead of using \eqref{eq:mmffe}, we may consider
\[
\tilde{\mathcal{F}}_{\tilde{\alpha}}  g_{\zbold}=F\Phi^{\infty}_{\zbold} \quad \text{for } \zbold \in \overline{D},
\]
with $F$ applied to the right-hand side rather than $\tilde{F}_{\tilde{\alpha}}$. The results in this case are very similar, and we omit them.
\end{remark}

We can now generalise \eqref{eq:mmffe} further by noting that the function $\Phi^{\infty}_{\zbold}$ in (\ref{eq:mmffe}) needs no
longer to be chosen as a far field pattern (since we create a far field pattern by applying $F_{\tilde{\alpha}}$). Taking an arbitrary $L^2(S)$ right-hand side in \eqref{eq:mmffe} we can replace the calculation of a norm of particular $g_{\zbold}$ by a calculation of the operator norm $\|\tilde{\mathcal{F}}_{\tilde{\alpha}}^{-1} \tilde{F}_{\tilde{\alpha}}\|$. 
We show the plots of these operator norms in Figure \ref{fig:norms}. The appropriate peaks at the eigenvalues of $\DtN_\Lambda$ are seen much better, and demonstrate  a substantial improvement on Figure \ref{fig:Gnorms}. 

\begin{figure}[htb!]
\centering
\includegraphics[width=0.45\textwidth]{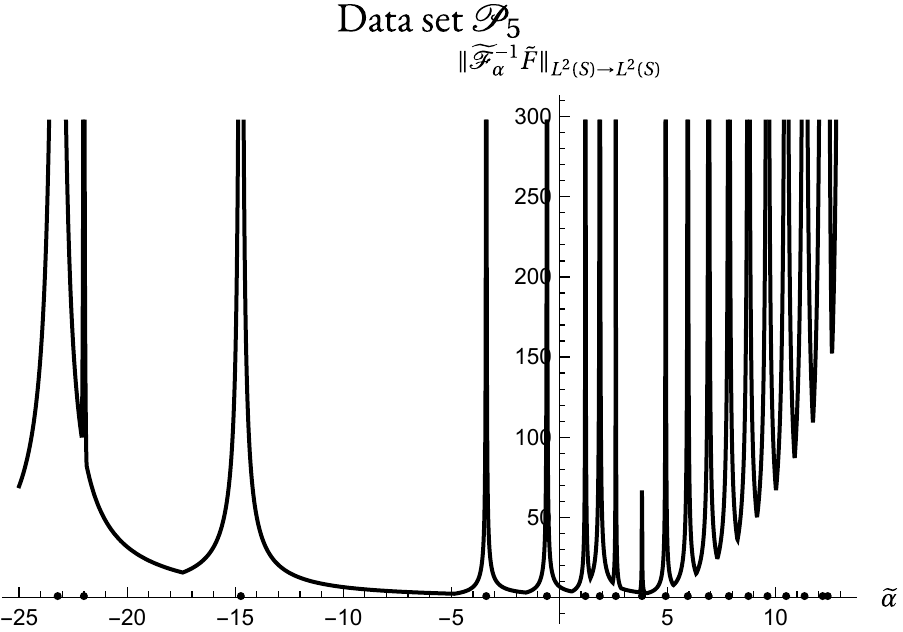}\hfill
\includegraphics[width=0.45\textwidth]{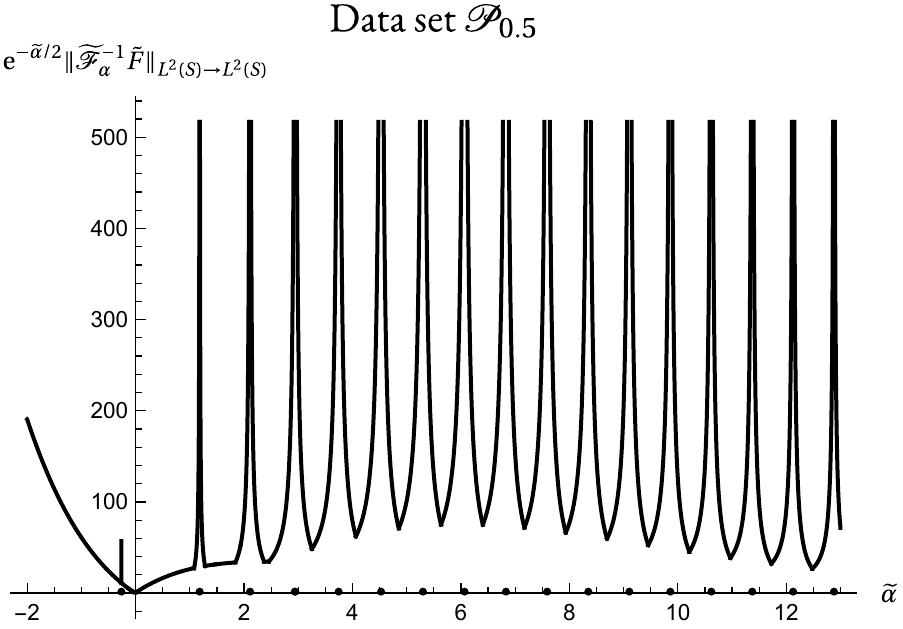}
\caption{Plots of the operator norms $\|\tilde{\mathcal{F}}_{\tilde{\alpha}}^{-1} \tilde{F}_{\tilde{\alpha}}\|_{L^2(s)\to L^2(S)}$ as functions of $\tilde{\alpha}$. Additional exponential scaling for data set $\mathcal{P}_{0.5}$ improves the appearance. \label{fig:norms}} 
 \end{figure}
 
 For further suggested modifications see \S\ref{sec:furthermod}.
 
%

\begin{appendices}
\setcounter{figure}{0}
\renewcommand{\thesection}{SM\arabic{section}}
\renewcommand{\thefigure}{SM\arabic{figure}}
\renewcommand{\thetable}{SM\arabic{table}}

\clearpage
\renewcommand{\thepage}{SM\arabic{page}}
\setcounter{page}{1}

\centerline{\bf\Large Supplementary materials}
\addcontentsline{toc}{section}{Supplementary materials}

\section{Eigenvalues of the nnDtN map for elasticity in a disk with constant density}\label{app:A}

Let $\Omega=D=\{(r\cos\theta, r\sin\theta); 0\le r<1\}$ be the unit disk in $\mathbb{R}^2$. Consider problem \eqref{eq:elasticvibrations}, \eqref{eq:elasticoperator} in $D$ and assume that $\rho_s$ is constant. In this case, we can write solutions of  \eqref{eq:elasticvibrations} using the separation of variable, see e.g. \cite{MorseFesh}, in the following manner.

Assuming for the moment $\Lambda=\omega^2>0$ (the case  $\Lambda<0$ is treated similarly, whereas the case $\Lambda=0$ requires a slightly different approach using the so-called Michell solutions of static elasticity problem, which we skip for brevity). We look for $\ubold$ in the form 
\begin{equation}\label{eq:potsol} 
\ubold = \Grad \psi_1(r,\theta)+\Curl \left(\nbold_{\mathbb{R}^2}
\psi_2(r,\theta) \right),
\end{equation}
with some unknown scalar potentials $\psi_j$, $j=1,2$, and  with $\nbold_{\mathbb{R}^2}$ being the unit vector orthogonal to the plane of the disk. 
Substituting this 
into \eqref{eq:elasticvibrations} we deduce that the scalar potentials $\psi_j$ satisfy the Helmholtz equations
\begin{equation}\label{eq:Helm} 
-\Delta \psi_j - \omega_j^2 \psi_j=0 \qquad\text{in }D ,
\end{equation}
where
\[
\omega_1:=\omega\sqrt{\frac{\rho_s}{\lambda+2\mu}},\qquad\omega_2:=\omega\sqrt{\frac{\rho_s}{\mu}}.
\]
Partial solutions of \eqref{eq:Helm} in the disk, regular at the origin,  with an angular momentum $q\in\mathbb{Z}$ are given in the standard way in terms of cylindrical Bessel functions by
\begin{equation}\label{eq:bes} 
\psi_{j,q}= 
J_{|q|}(\omega_j r)\exp(\imath q\theta) .
\end{equation}

For each of the problems considered in Section \ref{sec:MichaelOnEigs}, we now obtain the equations for eigenvalues corresponding to each angular momentum $q$ by substituting \eqref{eq:potsol}--\eqref{eq:bes}  into the appropriate boundary conditions. In particular, for eigenvalues of the nnDtN map, after some straightforward calculations we obtain that each $q\in\mathbb{Z}$ contributes to the  $\Spec(\DtN_{\Lambda})$, which is given by
\[
\alpha_q=\frac{A_q}{B_q},
\]
with
\begin{align*}
A_q&=J_{|q|}(\omega_1)J_{|q|}(\omega_2)\left(-\omega _1^2 (\lambda +2 \mu ) \left(2 (| q| -1) | q| -\omega _2^2\right)-2 \mu  \omega _2^2 (| q| -1) | q|\right)\\
&+J_{|q|}(\omega_1)J_{|q|+1}(\omega_2) 2 \omega _2 \left(2 \mu  | q| ^3-2 \mu  | q| -\omega _1^2 (\lambda +2 \mu )\right)\\
&+J_{|q|+1}(\omega_1)J_{|q|}(\omega_2) 2 \mu  \omega _1 \left(2 | q| ^3-2 | q| -\omega _2^2\right)\\
&-J_{|q|+1}(\omega_1)J_{|q|+1}(\omega_2) 4 \mu  \omega _1 \omega _2 \left(| q| ^2-1\right)
\end{align*}  
and 
\begin{align*}
B_q&=-J_{|q|}(\omega_1)J_{|q|}(\omega_2)| q| \omega _ 2^2+J_{|q|}(\omega_1)J_{|q|+1}(\omega_2) 2 \omega _2 | q|\\ 
&+J_{|q|+1}(\omega_1)J_{|q|}(\omega_2)\omega _1 \left(2 | q| +\omega _2^2\right)-J_{|q|+1}(\omega_1)J_{|q|+1} 2 \omega _1 \omega _2.
\end{align*}  
We note that the contributions of $\pm q$ coincide, therefore all eigenvalues associated to $q\ne 0$ will have multiplicity at least two.

For $\Lambda=0$, the expressions for the eigenvalues of the nnDtN map may be obtained either by taking the limit in the formulae above, or by a separate analysis of stationary solutions. Either approach leads to  
\[
\alpha_q=\frac{2(|q|^2-1)\mu(\lambda+\mu)}{|q|\lambda+(2|q|-1)\mu}.
\]
Notice that, here again, contributions of $\pm q$ coincide.

Table \ref{tab:tab1} lists, for both sets of parameters from \S\ref{sec:NumSteklovFS}, the eigenvalues $\alpha_q$ of the nnDtN map $\DtN_\Lambda$ whose value is less than $13$.

\setlength{\tabcolsep}{15pt}
\begin{table}[htb!]
\centering
\begin{tabular}{c c | c c}
\multicolumn{2}{c|}{$\mathcal{P}_{5}$}&\multicolumn{2}{c}{$\mathcal{P}_{0.5}$}\\
\multicolumn{1}{c}{$\alpha_q$}&\multicolumn{1}{c|}{$|q|$}&\multicolumn{1}{c}{$\alpha_q$}&\multicolumn{1}{c}{$|q|$}\\\hline
 -23.2133 & 0 & -0.2610 & 1 \\
 -22.0001 & 4 & 1.1820 & 2 \\
 -14.7364 & 3 & 2.1116 & 3 \\
 -3.3830 & 5 & 2.9372 & 0 \\
 -0.5739 & 6 & 2.9456 & 4 \\
 1.2091 & 7 & 3.7447 & 5 \\
 1.8721 & 1 & 4.5271 & 6 \\
 2.6131 & 8 & 5.3000 & 7 \\
 3.8243 & 9 & 6.0672 & 8 \\
 4.9210 & 10 & 6.8305 & 9 \\
 5.9430 & 11 & 7.5911 & 10 \\
 6.9132 & 12 & 8.3499 & 11 \\
 7.8456 & 13 & 9.1071 & 12 \\
 8.7495 & 14 & 9.8633 & 13 \\
 9.6313 & 15 & 10.6185 & 14 \\
 10.4957 & 16 & 11.3731 & 15 \\
 11.3460 & 17 & 12.1271 & 16 \\
 12.1846 & 18 & 12.8806 & 17 \\
 12.4194 & 2 &  &  \\
\end{tabular}
\caption{The first few eigenvalues of the nnDtN map $\DtN_\Lambda$ for the two sets of parameters. {The eigenvalues with $|q|>0$ are double, and those with $q=0$ are single.}\label{tab:tab1}}
\end{table}

To obtain the formulae for the impedance eigenvalues (that is, those of the nnNtD map up to a scaling factor), we take reciprocals of the expressions for the eigenvalues of the nnDtN map.

Finding the eigenvalues for the Neumann or mixed problems is reduced to solving, for each $q$, some transcendental equations in $\Lambda$. We omit the details.  

\section{The modified far field operator for a disk with constant density}\label{app:B}

The procedure for obtaining explicit expression for the modified far filed operator in case of the unit disk is fairly standard, and we mostly follow Colton and Kress here.

We start by looking at the problem \eqref{eq:PM1}. Let $\dbold=(\cos\phi, \sin\phi)$ and work in polar coordinates $(r, \theta)$ in the unit disk $D$.  We have, through the usual expansion of a plane wave in spherical waves,
\[
p^i((r, \theta); \dbold)=\mathrm{e}^{\imath k \dbold\cdot \xbold}=\mathrm{e}^{\imath k r\cos(\theta-\phi)}=\sum_{q=-\infty}^\infty \imath^q J_q(kr) \mathrm{e}^{\imath q(\theta-\phi)}.
\]  
We also have for the solution of Helmholtz equation satisfying Sommerfield's radiation condition,
\[
p^s((r, \theta); \dbold)=\sum_{q=-\infty}^\infty \hat{p}_q \imath^q H_q^{(1)}(kr) \mathrm{e}^{\imath q(\theta-\phi)},
\]
with coefficients $\hat{p}_q$ still to be determined. The latter yields, via standard asymtotics of Hankel functions, 
\[
p^\infty(\theta; \dbold)=\mathrm{e}^{-\imath\pi/4}\sqrt{\frac{2}{\pi k}} \sum_{q=-\infty}^\infty \hat{p}_q \mathrm{e}^{\imath q(\theta-\phi)}.
\]

Acting in a similar manner for the problem \eqref{PM_Steklov}, we obtain 
\[
h^s((r, \theta); \dbold)=\sum_{q=-\infty}^\infty \hat{h}_q \imath^n H_q^{(1)}(kr) \mathrm{e}^{\imath q(\theta-\phi)},
\]
with coefficients $\hat{h}_q=\hat{h}_{q,\alpha}$ still to be determined and which give
\[
h^\infty(\theta; \dbold)=\mathrm{e}^{-\imath\pi/4}\sqrt{\frac{2}{\pi k}} \sum_{q=-\infty}^\infty \hat{h}_q \mathrm{e}^{\imath q(\theta-\phi)}.
\]
Finding the coefficients $\hat{h}_q$ is easier by substituting into the boundary condition \eqref{PM_Steklov_b} and separating the harmonics:
\[
\hat{h}_{q}=-\imath^q\frac{kJ_q'(k)+\alpha J_q(k)}{k {H_q^{(1)}}'(k)+\alpha H_q^{(1)}(k)}.
\]

To evaluate coefficients $\hat{p}_q$, we substitute the expansion of $p^s$ into \eqref{PM1b} and use the fact that $\mathrm{e}^{\imath q \theta}$ are the eigenfunctions of the nnNtD map corresponding to the eigenvalues $\frac{1}{\alpha_{|q|}}$, 
\[
\NtD_{\Lambda}\left(\mathrm{e}^{\imath q \theta}\right)=\frac{1}{\alpha_{|q|}}\mathrm{e}^{\imath q \theta},
\]
that yields
\[
\hat{p}_q=-\imath^q\frac{kJ_q'(k)+\frac{\rho_f c_f^2 k^2}{\alpha_{|q|}} J_q(k)}{k {H_q^{(1)}}'(k)+\frac{\rho_f c_f^2 k^2}{\alpha_{|q|}} H_q^{(1)}(k)}.
\]

Combining everything together for
\[
g(\theta)= \sum_{q=-\infty}^{\infty} \hat{g}_q \mathrm{e}^{\imath q \theta}\in L^2([0,2\pi])
\]
(or equivalently for $\{\hat{g}_q\}\in \ell^2(\mathbb{Z})$), we obtain
\begin{align*}  
(F g)(\theta)&=\int_0^{2\pi} p^\infty(\theta; \phi)\,g(\phi)\,\mathrm{d}\phi=\mathrm{e}^{-\imath\pi/4}\sqrt{\frac{8\pi}{k}}\sum_{q=-\infty}^\infty\hat{p}_q \mathrm{e}^{\imath q \theta},\\
(F_\alpha g)(\theta)&=\int_0^{2\pi} h^\infty(\theta; \phi)\,g(\phi)\,\mathrm{d}\phi=\mathrm{e}^{-\imath\pi/4}\sqrt{\frac{8\pi}{k}}\sum_{q=-\infty}^\infty \hat{h}_q \mathrm{e}^{\imath m \theta},\\
(\mathcal{F}_\alpha g)(\theta)&=\int_0^{2\pi} (p^\infty(\theta; \phi)-h^\infty(\theta; \phi))\,g(\phi)\,\mathrm{d}\phi=\mathrm{e}^{-\imath\pi/4}\sqrt{\frac{8\pi}{k}}\sum_{q=-\infty}^\infty \hat{f}_{\alpha, q} \hat{g}_q \mathrm{e}^{\imath q \theta},
\end{align*} 
where 
\[
\hat{f}_{\alpha, q} := \hat{p}_q-\hat{h}_q.
\]

It is convenient to switch to the parameter $\tilde{\alpha}$ as in \eqref{eq:tildealpha} and set, in addition to \eqref{eq:tildeFdefn},
\begin{equation}\label{eq:tildefalpha}
\begin{aligned}
F_{\tilde{\alpha}}:=F_{\rho_f \omega^2/\tilde{\alpha}}:\, g(\theta)&\mapsto \mathrm{e}^{-\imath\pi/4}\sqrt{\frac{8\pi}{k}}\sum_{q=-\infty}^\infty \tilde{h}_{\tilde{\alpha},q} \hat{g}_q \mathrm{e}^{\imath q \theta},\\
\tilde{\mathcal{F}}_{\tilde{\alpha}}:\, g(\theta)&\mapsto \mathrm{e}^{-\imath\pi/4}\sqrt{\frac{8\pi}{k}}\sum_{q=-\infty}^\infty \tilde{f}_{\tilde{\alpha},q} \hat{g}_q \mathrm{e}^{\imath q \theta},
\end{aligned}
\end{equation}
where we have denoted 
\begin{equation}\label{eq:tildefph}
\begin{aligned}
\tilde{f}_{\tilde{\alpha},q}:=\hat{f}_{\rho_f c_f^2 k^2/\tilde{\alpha}, q}&=\tilde{p}_q-\tilde{h}_q,\\
\tilde{p}_q=\hat{p}_q&=-\imath^q\frac{\alpha_{|q|}J_q'(k)+\rho_f c_f^2 k J_q(k)}{\alpha_{|q|} {H_q^{(1)}}'(k)+\rho_f c_f^2 k H_q^{(1)}(k)},\\
\tilde{h}_q=\tilde{h}_{q,\tilde{\alpha}}:=\hat{h}_{\rho_f c_f^2 k^2/\tilde{\alpha}, q}&=-\imath^q\frac{\tilde{\alpha}J_q'(k)+ \rho_f c_f^2  k J_q(k)}{\tilde{\alpha} {H_q^{(1)}}'(k)+\rho_f c_f^2 k H_q^{(1)}(k)}.
\end{aligned}
\end{equation}
The various operator norms plotted in \S\ref{sec:NumSteklovFS} are then easily evaluated.

Generally speaking, we are looking at solutions of equations of the type
\begin{equation}\label{eq:tildefeqn}
\tilde{\mathcal{F}}_{\tilde{\alpha}}g(\theta)=\Psi(\theta),\quad\theta\in[0,2\pi),
\end{equation}
with different right-hand sides 
\[
\Psi(\theta)=\sum_{q=-\infty}^\infty \hat{\psi}_q  \mathrm{e}^{\imath q \theta}\in L^2([0,2\pi]).
\]
Taking into account \eqref{eq:tildefalpha} and \eqref{eq:tildefph}, the solution to \eqref{eq:tildefeqn} is formally given by 
\begin{equation}\label{eq:gsolution}
g(\theta)=\tilde{\mathcal{F}}_{\tilde{\alpha}}^{-1}\Psi(\theta)= \sum_{q=-\infty}^\infty (\tilde{f}_{\tilde{\alpha},q})^{-1} \hat{\psi}_q  \mathrm{e}^{\imath q \theta}
=\mathrm{e}^{\imath\pi/4}\sqrt{\frac{k}{8\pi}}\sum_{q=-\infty}^\infty \frac{1}{\tilde{p}_q-\tilde{h}_q} \hat{\psi}_q  \mathrm{e}^{\imath q \theta}.
\end{equation}

Returning to the equation  \eqref{eq:mmffe}, where we take $\zbold=-\zeta\in[-1,0]$ as discussed,
we rewrite the right-hand side using 
\[
\Phi^{\infty}_{\zbold}(\theta)= \frac{\mathrm{e}^{\mathrm{i}\pi/4} }{\sqrt{8\pi k } }  \mathrm{e}^{\mathrm{i}k \zeta\cos\theta}
= \frac{\mathrm{e}^{\mathrm{i}\pi/4} }{\sqrt{8\pi k } }  \sum_{q=-\infty}^\infty  \mathrm{i}^q J_q(k\zeta)\mathrm{e}^{\imath q \theta},
\]
which yields, with account of \eqref{eq:tildefalpha}, \eqref{eq:tildefph} and \eqref{eq:gsolution}, the solution
\[
g_{\zbold}(\theta)=\frac{\mathrm{e}^{\mathrm{i}\pi/4} }{\sqrt{8\pi k } } \sum_{q=-\infty}^\infty  \mathrm{i}^{q} \frac{\tilde{h}_q}{\tilde{p}_q-\tilde{h}_q} J_q(k\zeta)\mathrm{e}^{\imath q \theta},
\]
with the $L^2(S)$-norm squared
\begin{equation}\label{eq:gznorm1}
G(\zbold):=\|g_{\zbold}\|^2 = \frac{1}{4k} \sum_{q=-\infty}^\infty \left|\frac{\tilde{h}_q}{\tilde{p}_q-\tilde{h}_q}\right|^2 J_q^2(k\zeta).
\end{equation} 
The $L^2$ norm of $G(\zbold)$ is evaluated using the standard relation 
\begin{equation}\label{eq:besselint}
\int_0^1 \zeta J_q^2(k\zeta)\,\mathrm{d}\zeta=\frac{1}{2}\left(J_{|q|}^2(k)-\frac{|q|}{k}J_{|q|}(k)J_{|q|+1}(k)+J_{|q|+1}^2(k)\right).
\end{equation}

\section{Further modifications}\label{sec:furthermod}

Calculating operator norms  in arbitrary geometries would be costly; to illustrate the possibility of improving the results and at the same keeping the computational costs to a minimum, we suggest the following ad-hoc approach.
 Fix a value $A$ of a variable $\tilde{\alpha}$ (some experiments may be required in order to avoid $A$ coinciding with one of the nnDtN eigenvalues), and a parameter $t>\frac{1}{2}$, and consider the equation
\begin{equation}\label{eq:FA}
\tilde{\mathcal{F}}_{\tilde{\alpha}} f_{A,t} =  \tilde{\mathcal{F}}_{A} \Psi_t,
\end{equation}
where we set 
\[
\Psi_t(\theta):=\sum_{q=-\infty}^{+\infty} \frac{1}{(|q|+1)^t}\mathrm{e}^{\imath q \theta} .
\]
Here the parameter $t$ determines how quickly the Fourier coefficients of $\Psi_t$ decrease. In principle, one can use an additional random factor $\xi_q$ (e.g.  with $|\xi_q|\in[0.5,5]$) in the definition of $\Psi_t$, but this produces very similar results and is not necessary. 

The solution of \eqref{eq:FA} is given by 
\[
 f_{A,t}=\tilde{\mathcal{F}}_{\tilde{\alpha}}^{-1} \tilde{\mathcal{F}}_{A} \Psi_t,
 \]
and we plot its norm as a function of $\tilde{\alpha}$ in Figures  \ref{fig:fnorms5} and \ref{fig:fnorms05}, taking either $A=0$ or $A=10$ and either $t=0.55$ or $t=1$. This demonstrates  a very good prediction of nnDtN eigenvalues; however, this approach  still  needs a full theoretical analysis and justification for non-circular obstacles.

\begin{figure}[htb!]
\centering
\includegraphics[width=0.45\textwidth]{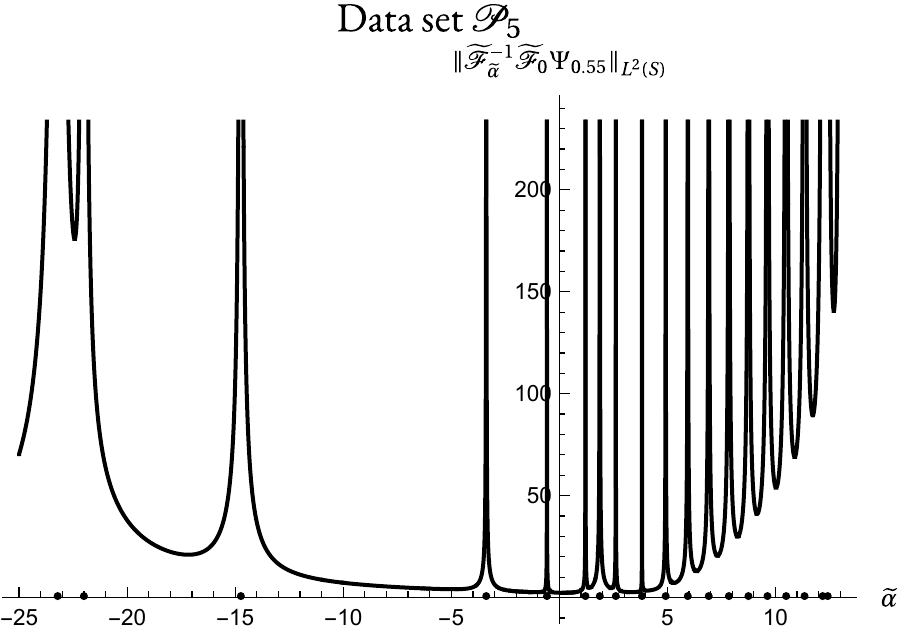}\hfill
\includegraphics[width=0.45\textwidth]{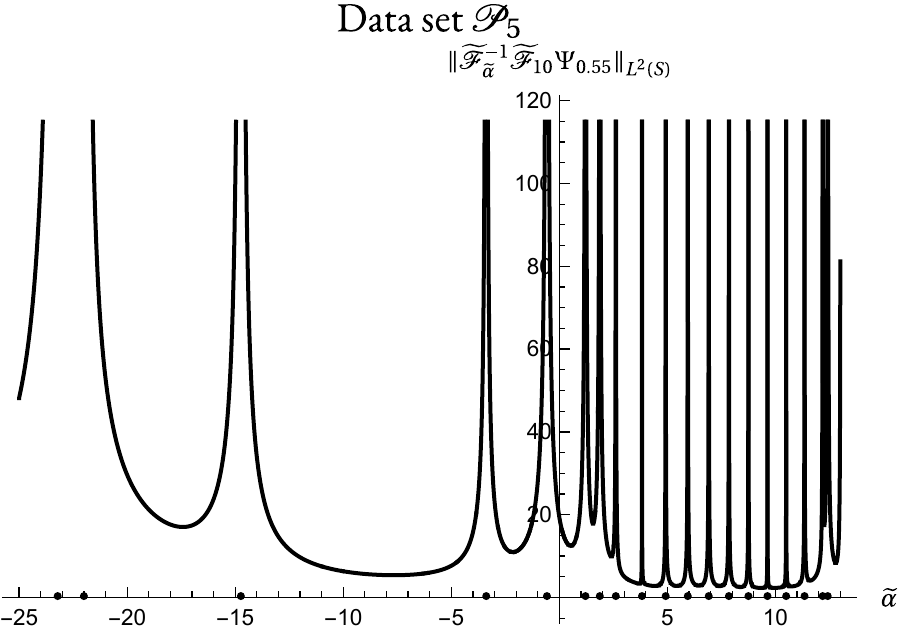}\\
\ \\\ \\ 
\includegraphics[width=0.45\textwidth]{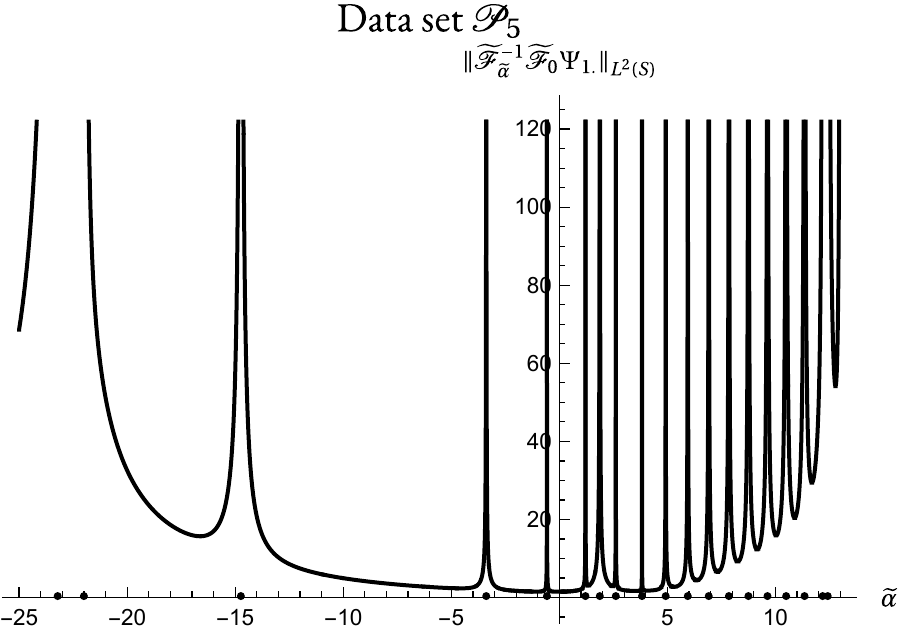}\hfill
\includegraphics[width=0.45\textwidth]{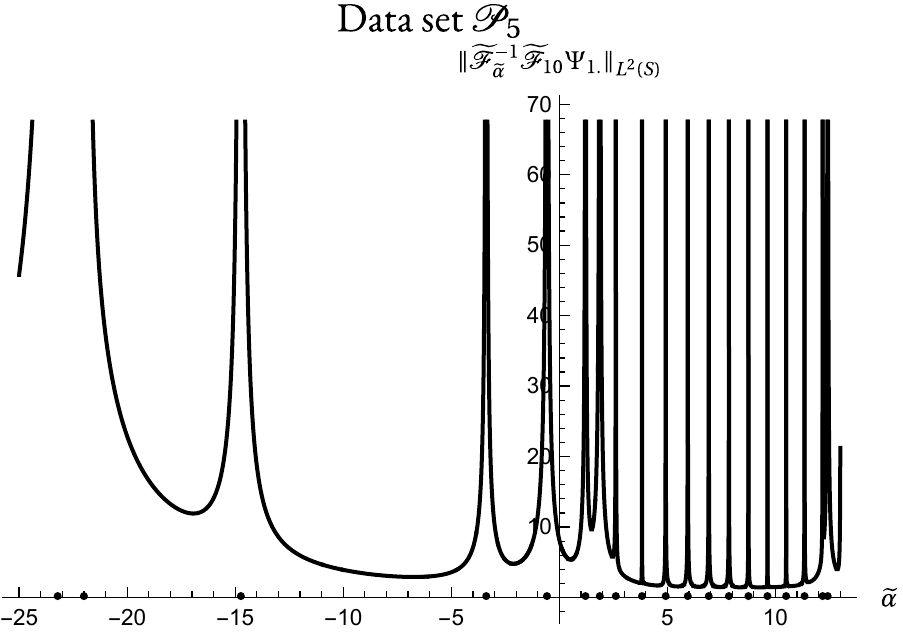}
\caption{Plots of the norms of  $f_{A,t}$ for different choices of $A$ and $t$ for dataset $\mathcal{P}_5$.\label{fig:fnorms5}} 
 \end{figure}

\begin{figure}[hbt!]
\centering
\includegraphics[width=0.45\textwidth]{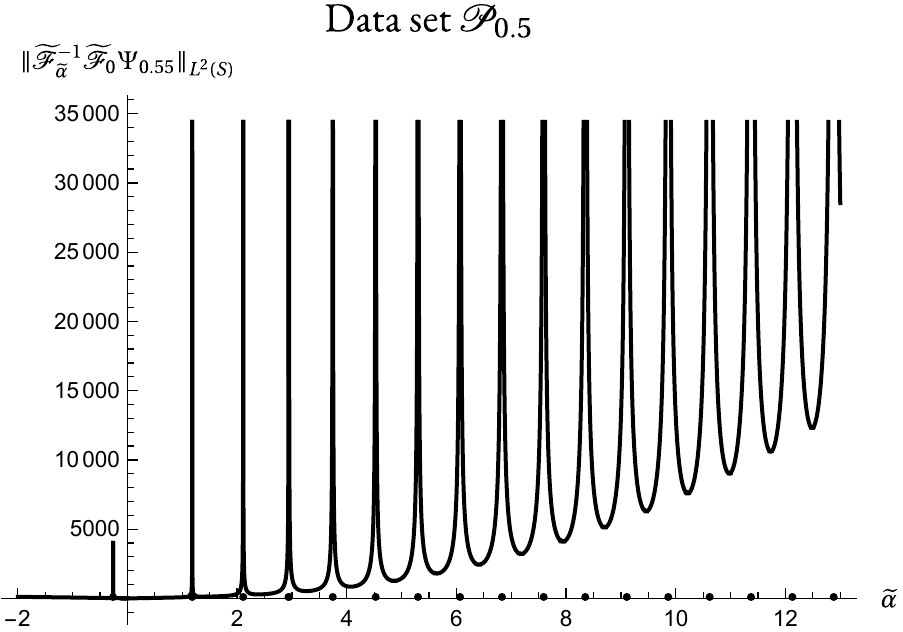}\hfill
\includegraphics[width=0.45\textwidth]{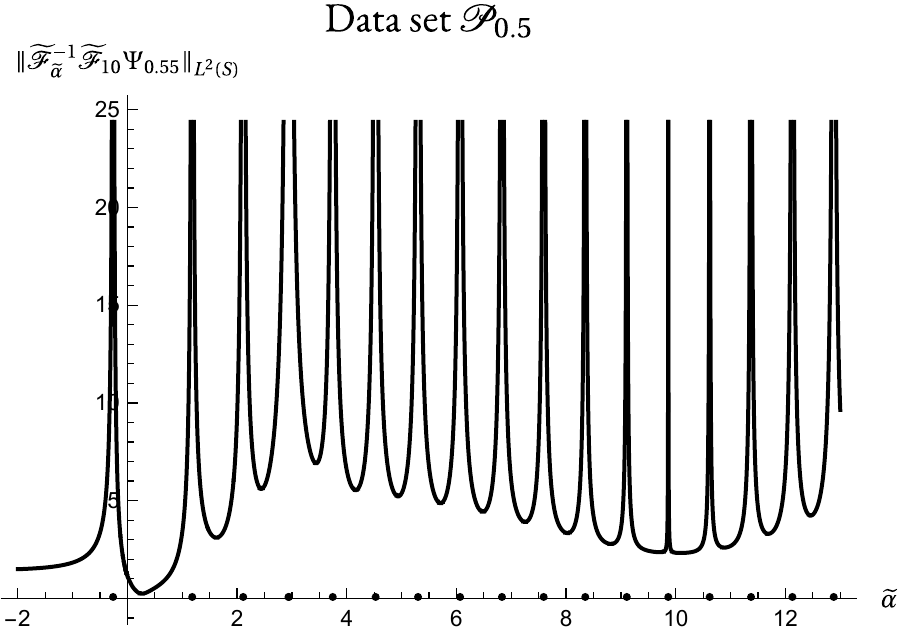}\\
\ \\\ \\
\includegraphics[width=0.45\textwidth]{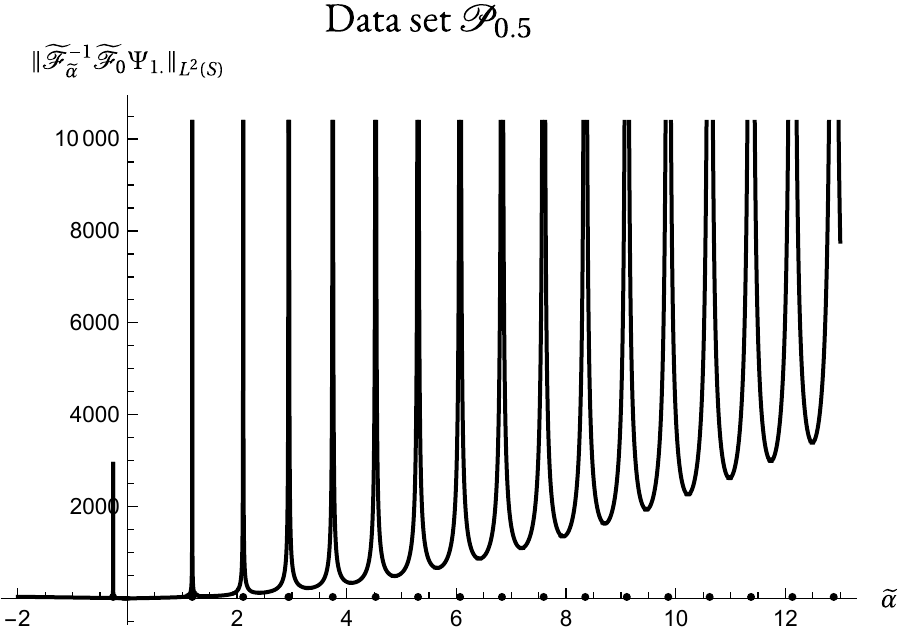}\hfill
\includegraphics[width=0.45\textwidth]{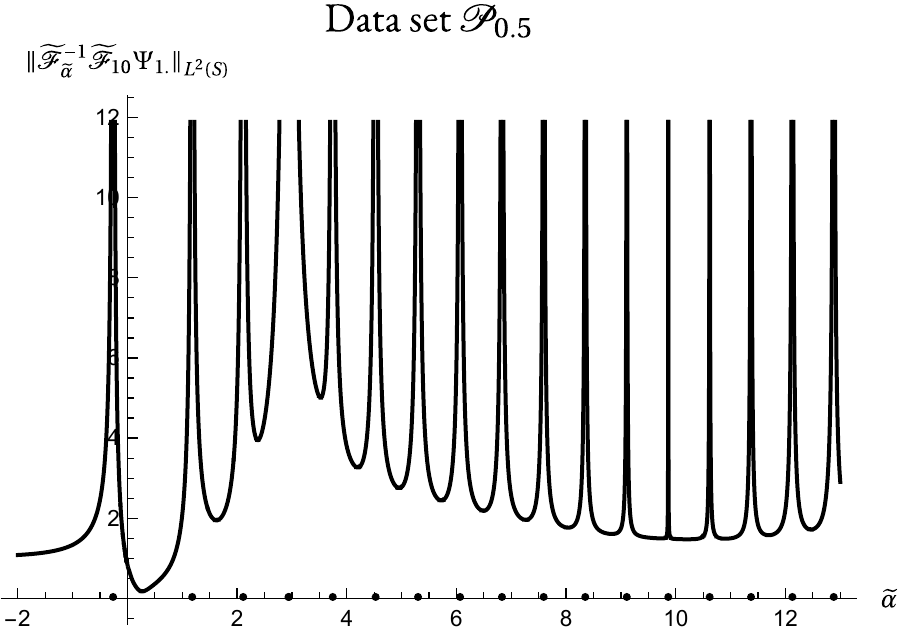}
\caption{Plots of the norms of  $f_{A,t}$ for different choices of $A$ and $t$ for dataset $\mathcal{P}_{0.5}$.\label{fig:fnorms05}} 
 \end{figure}

\clearpage
\end{appendices}
\end{document}